\numberwithin{equation}{section}
\newtheorem{theorem}{Theorem}[section]
\newtheorem{corollary}[theorem]{Corollary}
\newtheorem{definition}[theorem]{Definition}
\newtheorem{lemma}[theorem]{Lemma}
\newtheorem{proposition}[theorem]{Proposition}
\newtheorem{remark}[theorem]{Remark}
\newenvironment{proof}[1][Proof]{\noindent\textbf{#1.} }{\ \rule{0.5em}{0.5em}}
\begin{document}

\begin{center}
\Large \textbf{ Holmstedt's formula for the $K$-functional: the limit case $\theta_0=\theta_1$    \\}
\vskip0.5cm
\large{\textbf{Irshaad Ahmed}${}^{1}$,   \textbf{Alberto  Fiorenza${}^{2}$}} and  \textbf{Amiran Gogatishvili${}^{3}$}\\
\vskip0.5cm
\small{${}^{1}$Department of Mathematics, Sukkur IBA University, Sukkur, Pakistan.\\irshaad.ahmed@iba-suk.edu.pk

\small${}^{2}$Universit\'a di Napoli Federico II, Dipartimento di
Architettura, via Monteoliveto, 3, 80134 - Napoli,   Italy and
Consiglio Nazionale delle Ricerche, Istituto per le Applicazioni del
Calcolo ``Mauro Picone", Sezione di Napoli, via Pietro Castellino,
111, 80131 - Napoli, Italy.\\fiorenza@unina.it

\small${}^{3}$Institute of  Mathematics of the   Czech Academy of Sciences - \v Zitn\'a, 115 67 Prague 1,  Czech Republic.\\gogatish@math.cas.cz }
\end{center}

\begin{abstract} We consider $K$-interpolation spaces involving  slowly varying functions, and derive necessary and sufficient conditions for a Holmstedt-type formula  to be held in the limiting case $\theta_0=\theta_1\in\{0,1\}.$ We also study the case $\theta_0=\theta_1\in (0,1).$  Applications are given to Lorentz-Karamata spaces, generalized gamma spaces and Besov spaces.
\end{abstract}

\textbf{Key words}: $K$-functional, slowly varying functions, $K$-interpolation spaces, Holmstedt's formula,  reiteration\\

\textbf{MSC 2020:} 46B70, 46E30, 41A65,  26D15

\section{Introduction}
Let $(A_0,A_1)$  be a compatible couple of quasi-normed spaces.   For each $ f\in A_0+A_1$ and $t>0$, the Peetre's  $K$-functional is defined by
\begin{eqnarray*}
K(t,f)&=&K(t,f;A_0,A_1)\\
&=&\inf\{\|f_0\|_{A_0}+t\|f_1\|_{A_1}:\;f_0 \in A_0, \; f_1 \in A_1,\; f=f_0+f_1\}.
\end{eqnarray*}
Let $0<q\leq \infty, $ $0\leq \theta\leq1$, and let $b$ be a slowly varying function on $(0,\infty)$.   The $K$-interpolation space ${\bar{A}}_{\theta,q;b}=(A_{0},A_{1})_{\theta,q;b}$ is formed of those $f\in A_0+A_1$ for which the quasi-norm
 $$
\|f\|_{\bar{A}_{\theta,q;b}}=\|t^{-\theta-1/q}b(t)K(t,f)\|_{q, (0,\infty)}
$$
is finite; see \cite{GOT}. If $b\equiv 1$ and $(\theta, q)\in([0,1]\times[1,\infty])\setminus(\{0,1\}\times[1,\infty])$, then we recover the classical real interpolation spaces  ${\bar{A}}_{\theta,q}$ (see \cite{BL, T, BS}).  Let $0<q_0,q_1\leq \infty.$ The celebrated classical   Holmstedt's formula states that, for all for $f\in A_0+A_1$ and for all $t>0$,  we have
\begin{eqnarray*}
K(t^{\theta_1-\theta_0},f;{\bar{A}}_{\theta_0,q_0}, {\bar{A}}_{\theta_1,q_1})&\approx&\|u^{-\theta_0-1/{q_0}}K(u,f)\|_{q_0, (0,t)}\\
&&+t^{\theta_1-\theta_0}\|u^{-\theta_1-1/{q_1}}K(u,f)\|_{q_1, (t,\infty)},
\end{eqnarray*}
provided $0<\theta_0< \theta_1<1$ (see \cite[Theorem 2.1]{Ho}). In the limiting case $\theta=0$ or $\theta =1$, the classical space ${\bar{A}}_{\theta,q}$ contains only zero element unless $q=\infty.$ However, the limiting  $K$-interpolation spaces ${\bar{A}}_{0, q;b}$ and  ${\bar{A}}_{1, q;b}$ do make sense (for all $q\in(0,\infty]$) under appropriate conditions on $b$. In the non-limiting case $0<\theta_0<\theta_1<1$, we have the following straightforward extension of the classical Holmstedt's formula (see\cite[Theorem 3.1]{GOT}):
\begin{eqnarray*}
K(w(t),f;{\bar{A}}_{\theta_0,q_0;b_0}, {\bar{A}}_{\theta_1,q_1;b_1})&\approx&\|u^{-\theta_0-1/{q_0}}b_0(u)K(u,f)\|_{q_0, (0,t)}\\
&&+w(t)\|u^{-\theta_1-1/{q_1}}b_1(u)K(u,f)\|_{q_1, (t,\infty)},
\end{eqnarray*}
where $b_0$ and $b_1$ are slowly varying functions and $w(t)=t^{\theta_1-\theta_0}b_0(t)/b_1(t).$ The limiting case when $\theta_0=0$ and $\theta_1=1$ is contained in \cite[Example 5]{AKA}.  However, the limiting case $\theta_0=\theta_1\in [0,1]$ still remains open for general slowly varying functions $b_0$ and $b_1$. The main goal of the current  paper is to fill this gap.\\

Let us describe our main results. To this end,  let
$$
\rho(t)=\frac{\|u^{-1/q_0}b_0(u)\|_{q_0,(t,\infty)}}{\|u^{-1/q_1}b_1(u)\|_{q_1,(t,\infty)}},\;\; t>0
$$
and, for each $\epsilon>0$, put
$$
\rho_{\epsilon}(t)=\frac{\|u^{-1/q_0}b_0(u)\|_{q_0,(t,\infty)}^{1+\epsilon}}{\|u^{-1/q_1}b_1(u)\|_{q_1,(t,\infty)}}.
$$

 In the limiting case $\theta_0=\theta_1=0$, there are two distinguishing    cases:  $q_0\neq q_1$ and $q_0=q_1$. In the case when  $q_0\neq q_1$,  we establish that the following version of Holmstedt's formula
\begin{eqnarray*}
K(\rho(t),f;{\bar{A}}_{0,q_0;b_0}, {\bar{A}}_{0,q_1;b_1})&\approx&\|u^{-1/{q_0}}b_0(u)K(u,f)\|_{q_0, (0,t)}\\
&&+\rho(t)\|u^{-1/{q_1}}b_1(u)K(u,f)\|_{q_1, (t,\infty)},
\end{eqnarray*}
holds for   all for $f\in A_0+A_1$ and for all $t>0$ provided  the following condition is met:  $\rho_{\epsilon}$ is equivalent to a non-decreasing function for some $\epsilon >0.$ This condition also turns out to be necessary  under the additional assumption that the given couple $(A_0, A_1)$ is  $K$-surjective  (see Definition \ref{DKS} below).   When $q_0=q_1<\infty$, the previous estimate holds if  $\rho$ is increasing  and the couple $(A_0, A_1)$ is  $K$-surjective, and when $q_0=q_1=\infty$ the previous estimate holds under the natural condition that $\rho$ is increasing. The corresponding Holmstedt's formulae for  the symmetric counterpart limiting case $\theta_0=\theta_1=1$  follows immediately, by the usual symmetry argument, from the limiting case $\theta_0=\theta_1=0$.  Finally, in the limiting case $ \theta_0=\theta_1\in (0,1)$ we  further have   two distinguishing    cases: while in the case  $q_0=q_1$, a version of Holmstedt's formula does exist, there   exists no analogue of Holmstedt's formula  in the case  $q_0\neq q_1$  if  the given couple $(A_0, A_1)$ is  $K$-surjective. \\

The reader is referred to recent works \cite{AEEK, AKA, FS1, FS2, FS3, FS4, FS5, FS6,  DFS1, D2, D3} for other generalized versions of Holmstedt's formula.\\

The paper is organised as follows. All the Holmstedt's formulae mentioned above are contained in Section 3. The necessary background is collected in Section 2. Section 4 contains the reiteration formulae, and some concrete examples of these reiteration formulae are included in the final section 5.

\section{Background material}\label{due}
\subsection{Notation}
We write $A\lesssim B$ or $B\gtrsim A$  for two non-negative quantities $A$ and $B$ to mean that $A\leq c B$ for some positive constant $c$ which is independent of appropriate parameters involved in $A$ and $B$. If both the estimates   $A \lesssim B$ and  $B\lesssim A$ hold, we simply put $A\approx B.$ We let $\|\cdot\|_{q, (a,b)}$ denote the standard $L^q$-quasi-norm on an interval $(a,b)\subset \mathbb{ R}$. We write $X\hookrightarrow Y$ for two quasi-normed spaces $X$ and $Y$ to mean that $X$ is continuously embedded in $Y.$
\subsection{Slowly varying functions}
Let $b:(0,\infty)\rightarrow (0,\infty)$  be a Lebesgue measurable function. Following \cite{GOT}, we say $b$ is slowly varying on $(0,\infty)$ if for every  $\varepsilon>0,$  there  are  positive  functions $g_{\varepsilon}$ and $g_{-\varepsilon}$ on $(0,\infty)$ such that $g_{\varepsilon}$ is non-decreasing and  $g_{-\varepsilon}$ is non-increasing, and we have
$$
t^{\varepsilon}b(t)\approx g_{\varepsilon}(t)\; \; \text{and}\;\;  t^{-\varepsilon}b(t)\approx g_{-\varepsilon}(t)\;\; \text{for all }\; t\in (0,\infty).
$$

We denote the class of all slowly varying functions by $SV.$ Let $\mathbb{A}=(\alpha_0,\alpha_\infty)\in \mathbb{R}^2.$ Define
$$\ell^\mathbb{A}(t)=\left\{
\begin{array}
[c]{cc}%
(1-\ln t)^{\alpha_0}, & 0<t\leq 1,\\
& \\
(1+\ln t)^{\alpha_\infty}, &
t>1,
\end{array}
\right.
$$
 Then $\ell^\mathbb{A}\in SV.$ In addition, the class $SV$ contains compositions of appropriate log-functions, $\exp |\log t|^\alpha$ with $\alpha\in (0,1)$, etc.

We collect in next Proposition some elementary properties of slowly varying functions, which  will be used in the sequel time and again without explicit mention. The proofs of these assertions can be carried out as in \cite[Lemma $2.1$]{GOT} or \cite[Proposition $3.4.33$]{EE}.
\begin{proposition}\label{svp1} Given  $b$, $b_1$, $ b_2\in SV$, the following assertions hold:

\begin{enumerate}[(i)]
    \item  $b_1b_2\in SV$ and  $b^{r} \in SV$ for each $ r\in \mathbb{R}.$
    \item If $g(t)\approx h(t),\; t>0$, then $b(g(t))\approx b(h(t)),\; t>0.$

    \item  If $\alpha>0$, then

$$\|u^{\alpha-1}b(u)\|_{1,(0,t)}\approx t^{\alpha}b(t),\quad t>0.$$

\item If $\alpha>0$, then
$$\|u^{-\alpha-1}b(u)\|_{1,(t,\infty)}\approx t^{-\alpha}b(t),\quad t>0.$$

\item  Assume that $$\|u^{-1}b(u)\|_{1,(1,\infty)}<\infty,$$
and set
$$\tilde{b}(t)=\|u^{-1}b(u)\|_{1,(t,\infty)},\quad t>0.$$  Then $\tilde{b}\in SV,$ and $b(t)\lesssim \tilde{b}(t),\; t>0.$
\end{enumerate}

\end{proposition}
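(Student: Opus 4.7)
The strategy is to unwind the definition of slow variation in each item, exploiting the existence of the monotone equivalents $g_{\varepsilon}$ and $g_{-\varepsilon}$. Items (i) and (ii) are essentially formal. For (i), I would replace $\varepsilon$ by $\varepsilon/2$ in the defining condition for each factor, multiply the resulting non-decreasing (respectively non-increasing) equivalents, and handle $b^{r}$ by replacing $\varepsilon$ with $\varepsilon/|r|$ and raising the equivalents to the $r$-th power, splitting the cases $r\geq 0$ and $r<0$. For (ii), the key is the ``local'' equivalence $b(s)\approx b(t)$ whenever $c_{1} t\leq s\leq c_{2} t$ with $0<c_{1}\leq c_{2}$, obtained by sandwiching via $g_{\varepsilon}$ at one endpoint and $g_{-\varepsilon}$ at the other; applying this with $s=g(t)$ compared against $h(t)$ yields the claim.

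For (iii) and (iv), I would split the integrand using the slow-growth inequality for the upper bound and localize to a ratio window for the lower bound. Explicitly, for the upper bound in (iii), use $b(u)\lesssim (t/u)^{\varepsilon}b(t)$ for $u\leq t$ (from $g_{\varepsilon}$ non-decreasing) with $\varepsilon<\alpha$ chosen so that $\int_{0}^{t}u^{\alpha-1-\varepsilon}\,du$ converges and evaluates to a multiple of $t^{\alpha-\varepsilon}$; for the lower bound, restrict the integral to $[t/2,t]$ and invoke (ii) to replace $b(u)$ by $b(t)$ throughout. Item (iv) is treated symmetrically using $b(u)\lesssim (u/t)^{\varepsilon}b(t)$ for $u\geq t$, which comes from $g_{-\varepsilon}$ non-increasing.

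For (v), I would first establish the pointwise bound $b(t)\lesssim \tilde{b}(t)$ by restricting the defining integral to $[t,2t]$ and using the local equivalence from (ii) to obtain $\tilde{b}(t)\geq \int_{t}^{2t}u^{-1}b(u)\,du\approx b(t)\log 2$. Membership $\tilde{b}\in SV$ then splits into two sides: the non-increasing side is free, since $\tilde{b}$ is itself non-increasing and hence so is $t^{-\varepsilon}\tilde{b}(t)$. The main obstacle is showing that $t^{\varepsilon}\tilde{b}(t)$ is equivalent to a non-decreasing function. My plan is to prove the one-sided comparison $\tilde{b}(s)\lesssim (t/s)^{\varepsilon/2}\tilde{b}(t)$ for $s\leq t$ by writing $\tilde{b}(s)=\int_{s}^{t}u^{-1}b(u)\,du+\tilde{b}(t)$, bounding the integral via $b(u)\lesssim (t/u)^{\varepsilon/2}b(t)$ and the bound $b(t)\lesssim \tilde{b}(t)$ just established. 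This yields $s^{\varepsilon}\tilde{b}(s)\lesssim t^{\varepsilon}\tilde{b}(t)$, so the supremum $\sup_{0<s\leq t}s^{\varepsilon}\tilde{b}(s)$ furnishes a non-decreasing equivalent, completing the verification.
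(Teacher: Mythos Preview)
Your proposal is correct and follows the standard direct verification. The paper itself does not give a proof of this proposition but simply defers to \cite[Lemma~2.1]{GOT} and \cite[Proposition~3.4.33]{EE}; your argument is precisely the kind of routine unwinding of the definition that those references carry out. One small remark on item~(v): the passage from $\tilde{b}(s)\lesssim (t/s)^{\varepsilon/2}\tilde{b}(t)$ to $s^{\varepsilon}\tilde{b}(s)\lesssim t^{\varepsilon}\tilde{b}(t)$ is fine because multiplying by $s^{\varepsilon}$ gives $s^{\varepsilon}\tilde{b}(s)\lesssim s^{\varepsilon/2}t^{\varepsilon/2}\tilde{b}(t)\leq t^{\varepsilon}\tilde{b}(t)$ for $s\leq t$, so the non-decreasing equivalent is obtained exactly as you describe.
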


\subsection{$K$-interpolation spaces}

Let $A_0$ and $A_1$  be two quasi-normed spaces. We say $(A_0,A_1)$ is a  compatible  couple if   $A_0$ and $A_1$ are continuously embedded in the same Hausdorff topological vector space.  For each $ f\in A_0+A_1$ and $t>0$, the  $K$-functional is defined by
\begin{eqnarray*}
K(t,f)&=&K(t,f;A_0,A_1)\\
&=&\inf\{\|f_0\|_{A_0}+t\|f_1\|_{A_1}:\;f_0 \in A_0, \; f_1 \in A_1,\; f=f_0+f_1\}.
\end{eqnarray*}
Note that $K(t,f)$ is, as a function of $t$, non-decreasing on $(0,\infty),$ while $K(t,f)/t$ is,  as a function of $t$, non-increasing on $(0,\infty),$

Let $0<q\leq \infty, $ $0\leq \theta\leq1$, and let $b \in SV $.   The $K$-interpolation space ${\bar{A}}_{\theta,q;b}=(A_{0},A_{1})_{\theta,q;b}$ is formed of those $f\in A_0+A_1$ for which the quasi-norm
 $$
\|f\|_{\bar{A}_{\theta,q;b}}=\|t^{-\theta-1/q}b(t)K(t,f;A_0,A_1)\|_{q, (0,\infty)}
$$
is finite; see \cite{GOT}. If $b=\ell^\mathbb{A},$ then we obtain the $K$-interpolation spaces $\bar{A}_{\theta,q;\mathbb{A}}$ considered in \cite{EO}  and \cite{EOP}. If $\mathbb{A}=(0,0)$ and $(\theta, q)\in([0,1]\times[1,\infty])\setminus(\{0,1\}\times[1,\infty])$, then we recover the classical $K$-interpolation spaces  ${\bar{A}}_{\theta,q}$ (see \cite{BL, T, BS}).

It is not hard to check that for $\theta \in (0,1)$  the spaces $\bar{A}_{\theta,q;b}$  are intermediate, without any condition on $b$ and $q$, for the couple $(A_0,A_1)$, that is,
$$A_0\cap A_1\hookrightarrow \bar{A}_{\theta,q;b} \hookrightarrow  A_0+A_1.$$ However, while   working  with the limiting spaces ${\bar{A}}_{0,q;b}$ and ${\bar{A}}_{1,q;b}$, we have to impose an appropriate condition on $b$ and $q.$ For convenience, let us introduce two notations.  We say $b\in SV_{0,q}$ if $b\in SV$ and
\begin{equation*}\label{ntc1}
\|u^{-1/q}b(u)\|_{q, (1,\infty)}< \infty.
 \end{equation*}
And we say $b\in SV_{1,q}$ if  $b(1/t)\in SV_{0,q}.$  For  $b\in SV_{0,q}$ (or  $b\in SV_{1,q}$),   the space ${\bar{A}}_{0,q;b}$ (or ${\bar{A}}_{1,q;b}$) is intermediate for the couple $(A_0,A_1)$ (see \cite [Proposition 2.5]{GOT}).

\subsection{Weighted  inequalities}
First let us recall that  a function $\phi:(0,\infty) \to (0,\infty)$ is called  quasi-concave if both $\phi$ and $t\phi(1/t)$ are non-decreasing.
\begin{theorem}\label{TQFi}\cite[Theorem 5.1]{GP}
Let $0<p,q<\infty,$ and  let $v$ and $w$ be positive functions on $(0,\infty).$ Consider the inequality
\begin{equation}\label{TQFe1}
\left(\int_{0}^{\infty}\left[ h(s)w(s)\right]^{q}\frac{ds}{s}\right)^{1/q}\leq C \left(\int_{0}^{\infty}\left[ h(s)v(s)\right]^{p}\frac{ds}{s}\right)^{1/p}.
\end{equation}

(a) Let $0<p\leq q<\infty.$ Then the inequality (\ref{TQFe1}) holds  for all quasi-concave functions $h$ on $(0,\infty)$ if and only if
$$
A_1=\sup\limits_{x>0}\frac{\left(\int_{0}^{x}s^qw^{q}(s)\frac{ds}{s}+x^q\int_{x}^{\infty}w^{q}(s)\frac{ds}{s}\right)^{1/q}}{\left(\int_{0}^{x}s^pv^{p}(s)\frac{ds}{s}+x^p\int_{x}^{\infty}v^p(s)\frac{ds}{s}\right)^{1/p}}<\infty.
$$
Moreover,   $C=A_1$ is the best constant.

(b) Let $0<q < p<\infty.$  Then the inequality (\ref{TQFe1}) holds  for all quasi-concave functions $h$ on $(0,\infty)$ if and only if
$$
A_2=\left(\int_{0}^{\infty}\frac{\left(\int_{0}^{x}s^qw^{q}(s)\frac{ds}{s}+x^q\int_{x}^{\infty}w^{q}(s)\frac{ds}{s}\right)^{\frac{q}{p-q}}}{\left(\int_{0}^{x}s^pv^{p}(s)\frac{ds}{s}+x^p\int_{x}^{\infty}v(s)^{p}\frac{ds}{s}\right)^{\frac{q}{p-q}}}x^qw^q(x)\frac{dx}{x}\right)^{1/q-1/p}<\infty.
$$
Moreover,   $C=A_2$ is the best constant.
\end{theorem}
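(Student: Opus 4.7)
The starting observation is that the elementary quasi-concave functions $h_x(s) := \min(s,x)$, $x>0$, realize exactly the expressions appearing in the numerator and denominator of $A_1$: a direct computation shows
\begin{align*}
\|h_x w\|_{L^q(ds/s)}^q &= \int_0^x s^q w^q(s)\,\tfrac{ds}{s} + x^q \int_x^\infty w^q(s)\,\tfrac{ds}{s}, \\
\|h_x v\|_{L^p(ds/s)}^p &= \int_0^x s^p v^p(s)\,\tfrac{ds}{s} + x^p \int_x^\infty v^p(s)\,\tfrac{ds}{s}.
\end{align*}
Since each $h_x$ is itself quasi-concave, inserting $h=h_x$ into (\ref{TQFe1}) immediately forces $A_1<\infty$ in case (a). For case (b) the necessity of $A_2<\infty$ follows by the same testing philosophy, now integrating the pointwise inequality against the measure $x^q w^q(x)\,dx/x$ after raising to the exponent $q/(p-q)$, so that the resulting integrand in the numerator matches $A_2$ exactly.

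For sufficiency I would rely on the classical fact that every quasi-concave $h$ on $(0,\infty)$ is comparable, within a universal constant, to its least concave majorant $\hat{h}$, and that any nonnegative concave function vanishing at the origin admits an integral representation
$$\hat{h}(s) = \int_{(0,\infty]} \min(s,t)\,d\mu(t)$$
for some positive Borel measure $\mu$ (the mass at $\infty$ encoding the linear part). Plugging this representation into the left-hand side of (\ref{TQFe1}) and applying Minkowski's integral inequality in $L^q(ds/s)$ reduces the estimate to the elementary bounds on each $h_t$, which by construction are controlled by $A_1$ in case (a) or by the integrated version with $A_2$ in case (b). A reverse Minkowski step --- or, when $p<1$, the subadditivity of $x\mapsto x^p$ --- then recombines the $\mu$-integrated quantities into $\|\hat{h}\,v\|_{L^p(ds/s)}$, which is equivalent to $\|h v\|_{L^p(ds/s)}$.

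The main obstacle I anticipate is case (b) with $q<p$, in both directions. For necessity, a single test function $h_x$ is not enough; one must construct an extremal $h$ in the form of a smoothed staircase built from the weights $v$ and $w$ themselves, so as to saturate the $\ell^q/\ell^p$ gap appearing in the exponent $q/(p-q)$. For sufficiency, the passage from the continuous Minkowski--integral representation to the precise integral defining $A_2$ is most cleanly carried out by a dyadic discretization adapted to the level sets $\{s: h(s)\approx 2^n\}$, followed by H\"older's inequality with exponents $p/q$ and $p/(p-q)$; the delicate point is to verify that this discretization preserves the quasi-concavity envelope and matches the continuous $A_2$ without any weight-dependent logarithmic loss.
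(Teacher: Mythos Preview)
The paper does not prove this theorem at all: it is quoted verbatim from \cite[Theorem~5.1]{GP} as part of the background material in Section~\ref{due}, so there is no ``paper's own proof'' to compare against. What can be said is how your sketch relates to the argument in the cited source.

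Your necessity argument in (a) via the test functions $h_x(s)=\min(s,x)$ is exactly the right one. The sufficiency argument, however, has a genuine gap. After Minkowski in $L^q$ you arrive at $\int \|h_t v\|_{p}\,d\mu(t)$ and then claim a ``reverse Minkowski step'' recombines this into $\|\hat h\,v\|_p$. The reverse Minkowski integral inequality for nonnegative integrands holds only when $0<p\le 1$; for $1<p\le q<\infty$ the inequality goes the wrong way and the step fails. The phrase ``or, when $p<1$, the subadditivity of $x\mapsto x^p$'' suggests you are aware this is the delicate range, but you have not indicated any mechanism that covers $p>1$. The argument in \cite{GP} avoids this obstruction entirely: it does not use the integral representation of the concave majorant, but rather a discretization of $h$ along a covering sequence $\{x_k\}$ adapted to the level sets of the weight integrals, reducing both sides to weighted $\ell^q/\ell^p$ sums that are compared via the embedding $\ell^p\hookrightarrow\ell^q$ (this is where $p\le q$ enters). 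You already invoke exactly this discretization idea for case~(b); the point is that it is needed for case~(a) as well whenever $p>1$, and your current outline for (a) does not close without it.
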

\begin{corollary}\label{C1QCF}
Let $0<p,q<\infty,$ and  let $v\in SV_{0,p}$ and $w\in SV_{0,q}.$

(a) Let $0<p\leq q<\infty.$ Then the inequality (\ref{TQFe1}) holds  for all quasi-concave functions $h$ on $(0,\infty)$ if and only if

$$
A_3=\sup\limits_{x>0}\frac{\left(\int_{x}^{\infty}w^{q}(s)\frac{ds}{s}\right)^{1/q}}{\left(\int_{x}^{\infty}v^p(s)\frac{ds}{s}\right)^{1/p}}<\infty.
$$
Moreover,   $C=A_3$ is the best constant.

(b) Let $0<q < p<\infty.$ Then the inequality (\ref{TQFe1}) holds  for all quasi-concave functions $h$ on $(0,\infty)$ if and only if

$$
A_4=\left(\int_{0}^{\infty}\frac{\left(\int_{x}^{\infty}w^{q}(s)\frac{ds}{s}\right)^{\frac{q}{p-q}}}{\left(\int_{x}^{\infty}v(s)^{p}\frac{ds}{s}\right)^{\frac{q}{p-q}}}w^q(x)\frac{dx}{x}\right)^{1/q-1/p}<\infty.
$$
Moreover,   $C=A_4$ is the best constant.
\end{corollary}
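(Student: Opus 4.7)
The plan is to invoke Theorem~\ref{TQFi} and then reduce the constants $A_1$ and $A_2$ to the simpler $A_3$ and $A_4$ by exploiting the slowly varying hypotheses on $v$ and $w$.

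The crux is the pointwise equivalence
\[
\int_0^x s^{r} b^{r}(s)\frac{ds}{s} + x^{r} \int_x^\infty b^{r}(s)\frac{ds}{s} \approx x^{r} \int_x^\infty b^{r}(s)\frac{ds}{s}, \quad x>0,
\]
valid for any $b \in SV_{0,r}$ with $r>0$. Only the $\lesssim$ direction requires argument. Since $b^{r}\in SV$ by Proposition~\ref{svp1}(i), Proposition~\ref{svp1}(iii) with $\alpha = r$ yields that the first summand is $\approx x^{r} b^{r}(x)$; and Proposition~\ref{svp1}(v), whose integrability hypothesis is exactly the assumption $b \in SV_{0,r}$, gives $b^{r}(x) \lesssim \int_x^\infty b^{r}(s)/s\,ds$, so the first summand is absorbed by the second.

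Applying this equivalence with $(b,r)=(w,q)$ and with $(b,r)=(v,p)$, the ratio inside the $\sup$ in Theorem~\ref{TQFi}(a) collapses to
\[
\frac{x \bigl(\int_x^\infty w^q(s)/s\,ds\bigr)^{1/q}}{x \bigl(\int_x^\infty v^p(s)/s\,ds\bigr)^{1/p}},
\]
the factors of $x$ cancel, and one concludes $A_1 \approx A_3$, proving part (a). For part (b), the same substitutions produce numerator and denominator in the integrand of $A_2$ with extra factors $x^{q^2/(p-q)}$ and $x^{pq/(p-q)}$ respectively; since $q^2-pq = -q(p-q)$, their ratio is $x^{-q}$, which cancels exactly the $x^{q}$ prefactor already present in $A_2$ and leaves $A_4$.

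I do not expect any substantive obstacle beyond the exponent bookkeeping described above; the essential analytic input, namely the pointwise equivalence at the start, follows directly from the cited properties of slowly varying functions, and the $\approx$-statements of the paper accommodate the equivalence of best constants lost in the reduction.
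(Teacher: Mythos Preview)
Your proposal is correct and follows essentially the same route as the paper: the paper's one-line proof invokes Theorem~\ref{TQFi} together with the listed properties of slowly varying functions (the citation to ``(iv) and (vi)'' appears to be a typographical slip for (iii) and (v), precisely the items you use), and your key equivalence $\int_0^x s^r b^r(s)\,ds/s + x^r\int_x^\infty b^r(s)\,ds/s \approx x^r\int_x^\infty b^r(s)\,ds/s$ for $b\in SV_{0,r}$, together with the exponent cancellation you describe in part~(b), is exactly the reduction the paper has in mind.
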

\begin{proof} The proof immediately follows from the previous theorem in view of the assertions (iv) and (vi) in Proposition \ref{svp1}.
\end{proof}
\begin{corollary}\label{C2QCF}
Let $0<p,q<\infty,$ and  let $v\in SV_{0,p}$ and $w\in SV_{0,q}.$ Consider the inequality

\begin{equation}\label{C2QCFe1}
\left(\int_{0}^{\infty}\left[ h(s)w(s)\chi_{(0,t)}(s)\right]^{q}\frac{ds}{s}\right)^{1/q}\lesssim \phi(t) \left(\int_{0}^{\infty}\left[ h(s)v(s)\right]^{p}\frac{ds}{s}\right)^{1/p}.
\end{equation}
(a) Let $0<p\leq q<\infty.$ Then the inequality (\ref{C2QCFe1}) holds  for all quasi-concave functions $h$ on $(0,\infty)$ if and only if we have

\begin{equation}\label{C2QCFe2}
\sup\limits_{0<x<t}\frac{\left(\int_{x}^{\infty}w^{q}(s)\frac{ds}{s}\right)^{1/q}}{\left(\int_{x}^{\infty}v^p(s)\frac{ds}{s}\right)^{1/p}}\lesssim \phi(t),\;\; t>0.
\end{equation}

(b) Let $0<q < p<\infty.$ Then the inequality (\ref{C2QCFe1}) holds  for all quasi-concave functions $h$ on $(0,\infty)$ if and only if  we have

\begin{equation}\label{C2QCFe3}
\left(\int_{0}^{t}\frac{\left(\int_{x}^{\infty}w^{q}(s)\frac{ds}{s}\right)^{\frac{q}{p-q}}}{\left(\int_{x}^{\infty}v(s)^{p}\frac{ds}{s}\right)^{\frac{q}{p-q}}}w^q(x)\frac{dx}{x}\right)^{1/q-1/p}\lesssim \phi(t), \;\; t>0.
\end{equation}
\end{corollary}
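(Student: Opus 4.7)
The strategy is to regard (\ref{C2QCFe1}), for each fixed $t > 0$, as an instance of the inequality (\ref{TQFe1}) of Theorem \ref{TQFi} applied to the truncated weight pair $(w\chi_{(0,t)}, v)$. Theorem \ref{TQFi} then characterises the best constant in (\ref{C2QCFe1}) as a certain supremum $A_1(t)$ (part (a)) or $A_2(t)$ (part (b)), and my task is to simplify $A_j(t)$, using the slowly varying properties of $v$ and $w$ encoded in Proposition \ref{svp1}, until it matches the supremum in (\ref{C2QCFe2}) or (\ref{C2QCFe3}). This mirrors the deduction of Corollary \ref{C1QCF} from Theorem \ref{TQFi}, the only new feature being the truncation by $\chi_{(0,t)}$.

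In case (a), Theorem \ref{TQFi}(a) gives
\begin{equation*}
A_1(t) = \sup_{x>0}\frac{\left(\int_0^x s^{q-1}w^q\chi_{(0,t)}\,ds + x^q\int_x^\infty w^q\chi_{(0,t)}\,ds/s\right)^{1/q}}{\left(\int_0^x s^{p-1}v^p\,ds + x^p\int_x^\infty v^p\,ds/s\right)^{1/p}}.
\end{equation*}
Writing $G(x):=\left(\int_x^\infty v^p\,ds/s\right)^{1/p}$ and $F(x):=\left(\int_x^\infty w^q\,ds/s\right)^{1/q}$, Proposition \ref{svp1}(iii),(v) applied to $v\in SV_{0,p}$ simplifies the denominator to $\approx xG(x)$, exactly as in the proof of Corollary \ref{C1QCF}. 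The truncation affects only the numerator: for $x\geq t$ the tail integral vanishes and $\int_0^t s^{q-1}w^q\,ds\approx t^q w^q(t)$ by Proposition \ref{svp1}(iii); for $x<t$ the numerator to the power $q$ is $\int_0^x s^{q-1}w^q\,ds+x^q\int_x^t w^q\,ds/s\approx x^q w^q(x)+x^q\int_x^t w^q\,ds/s$.

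The core of the argument is the equivalence $A_1(t)\approx \sup_{0<x<t}F(x)/G(x)$. For the $\lesssim$ direction, on $x<t$ one extends $\int_x^t\le \int_x^\infty=F(x)^q$ and combines with $w\lesssim F$ (Proposition \ref{svp1}(v) applied to $w$) to get that the numerator is $\lesssim xF(x)$; on $x\ge t$ one uses that $x\mapsto xG(x)$ is equivalent to a non-decreasing function (since $G\in SV$, writing $xG(x)=x^{1-\varepsilon}(x^\varepsilon G(x))$ with $\varepsilon>0$ small expresses it as a product of two non-decreasing factors) to reduce the supremum to $x=t$, yielding $\lesssim w(t)/G(t)\le F(t)/G(t)\le \sup_{0<x<t}F(x)/G(x)$. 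For the $\gtrsim$ direction I split $x\in(0,t)$ into two regimes: if $\int_x^t w^q\,ds/s\gtrsim F(t)^q$ then $F(x)^q\approx\int_x^t w^q\,ds/s$ and the numerator is $\gtrsim xF(x)$, giving $F(x)/G(x)\lesssim A_1(t)$ directly; otherwise $F(x)\approx F(t)$ uniformly on $(0,t)$, whence $\sup_{0<x<t}F(x)/G(x)\approx F(t)/G(t)$, and this last quantity must be recovered by a direct test-function argument against (\ref{C2QCFe1}).

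Case (b) runs in parallel with Theorem \ref{TQFi}(b) in place of (a), the same applications of Proposition \ref{svp1} then converting $A_2(t)$ into the condition (\ref{C2QCFe3}) without essential changes. I anticipate the main obstacle to be the second regime of the $\gtrsim$ direction in case (a), where the truncation has removed the mass of $w^q/s$ that would naively produce $F(t)/G(t)$ from the numerator; closing the bound there demands a careful combination of the slow variation of $F$ itself (Proposition \ref{svp1}(v)) with the monotonicity of $G$ and a suitably chosen quasi-concave test $h$ in (\ref{C2QCFe1}).
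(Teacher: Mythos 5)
Your plan — fix $t$, apply Theorem \ref{TQFi} to the truncated pair $(w\chi_{(0,t)},v)$, and simplify the resulting constant via slow variation — is surely the intended derivation (the paper states Corollary \ref{C2QCF} without proof, immediately after proving Corollary \ref{C1QCF} in exactly this way). Your reduction of the denominator to $\approx xG(x)$ and the entire ``if'' direction ($\sup_{0<x<t}F/G\lesssim\phi$ implies $A_1(t)\lesssim\phi(t)$) are correct.

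The gap you flag in the ``only if'' direction is, however, not a technicality that a clever test function will close; it appears to be genuine, and the rough phrasing ``$F(x)\approx F(t)$ uniformly on $(0,t)$'' masks the problem. After your regime split, what remains is precisely the bound $F(t)/G(t)\lesssim A_1(t)$. But $A_1(t)\approx\max\bigl\{\sup_{0<x<t}\tfrac{\bigl(w^q(x)+\int_x^t w^q\,ds/s\bigr)^{1/q}}{G(x)},\,\tfrac{w(t)}{G(t)}\bigr\}$, and near $x=t$ the numerator only sees $w(x)$, not $F(x)$: truncation has thrown away $\int_t^\infty w^q\,ds/s$, which can dominate. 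Concretely, with $p=q=1$, take $w(s)=(\ln\ln s)^{-2}(\ln s)^{-1}$ and $v(s)=(\ln s)^{-1/2}\exp(-(\ln s)^{1/2})$ (suitably modified near $0$); both lie in $SV_{0,1}$, $F(s)\approx(\ln\ln s)^{-1}$, $G(s)\approx\exp(-(\ln s)^{1/2})$, and $F/G$ is increasing, so $\sup_{0<x<t}F(x)/G(x)\approx(\ln\ln t)^{-1}\exp((\ln t)^{1/2})$. A direct optimization (set $u=\ln x$, $T=\ln t$) shows $A_1(t)\approx T^{-1/2}(\ln T)^{-2}e^{T^{1/2}}$, so $A_1(t)/\bigl(F(t)/G(t)\bigr)\to 0$. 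Taking $\phi=A_1$, inequality (\ref{C2QCFe1}) then holds (Theorem \ref{TQFi}(a)) while (\ref{C2QCFe2}) fails, so the ``only if'' direction of part (a) cannot hold as stated.

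The moral is that $x_0$ with $F(x_0)^q\approx 2F(t)^q$ can be far smaller than $t$, and slow variation of $G$ does not force $G(x_0)\approx G(t)$; your boundary argument therefore loses control exactly where you anticipated. Any test function $h$ supported near $(0,t)$ that is quasi-concave produces, on the left of (\ref{C2QCFe1}), a quantity governed by $\bigl(w^q(x)+\int_x^t w^q\,ds/s\bigr)^{1/q}$, not $F(x)$, so no choice recovers $F(t)/G(t)$. The same phenomenon afflicts part (b). You should either restrict to weights for which $F(t)\lesssim w(t)+\bigl(\int_{x_0}^t w^q\,ds/s\bigr)^{1/q}$ with $G(x_0)\approx G(t)$ (e.g.\ powers of iterated logarithms, which is what the applications in Sections 4–5 actually use), or replace (\ref{C2QCFe2}) and (\ref{C2QCFe3}) by the exact expressions coming from Theorem \ref{TQFi} applied to the truncated weight (i.e.\ with $\int_x^\infty w^q\,ds/s$ replaced by $w^q(x)+\int_x^t w^q\,ds/s$), in which case no gap remains and your argument goes through.
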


\begin{corollary}\label{C3QCF}
Let $0<p,q<\infty,$ and  let $v\in SV_{0,p}$ and $w\in SV_{0,q}.$ Consider the inequality

\begin{equation}\label{C3QCFe1}
\left(\int_{0}^{\infty}\left[ h(s)w(s)\chi_{(t,\infty)}(s)\right]^{q}\frac{ds}{s}\right)^{1/q}\lesssim \psi(t) \left(\int_{0}^{\infty}\left[ h(s)v(s)\right]^{p}\frac{ds}{s}\right)^{1/p}.
\end{equation}
(a) Let $0<p\leq q<\infty.$ Then the inequality (\ref{C3QCFe1}) holds  for all quasi-concave functions $h$ on $(0,\infty)$ if and only if we have

\begin{equation}\label{C3QCFe2}
\sup\limits_{x>t}\frac{\left(\int_{x}^{\infty}w^{q}(s)\frac{ds}{s}\right)^{1/q}}{\left(\int_{x}^{\infty}v^p(s)\frac{ds}{s}\right)^{1/p}}\lesssim \psi(t),\;\; t>0.
\end{equation}

(b) Let $0<q < p<\infty.$ Then the inequality (\ref{C3QCFe1}) holds  for all quasi-concave functions $h$ on $(0,\infty)$ if and only if  we have

\begin{equation}\label{C3QCFe3}
\left(\int_{t}^{\infty}\frac{\left(\int_{x}^{\infty}w^{q}(s)\frac{ds}{s}\right)^{\frac{q}{p-q}}}{\left(\int_{x}^{\infty}v(s)^{p}\frac{ds}{s}\right)^{\frac{q}{p-q}}}w^q(x)\frac{dx}{x}\right)^{1/q-1/p}\lesssim \psi(t), \;\; t>0.
\end{equation}
\end{corollary}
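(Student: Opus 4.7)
The plan is to derive Corollary~\ref{C3QCF} from Theorem~\ref{TQFi} by substituting the truncated weight $w_t:=w\chi_{(t,\infty)}$ in place of $w$. With this substitution, the left-hand side of (\ref{C3QCFe1}) is exactly the left-hand side of (\ref{TQFe1}) applied to the pair $(v,w_t)$, and the right-hand side is $\psi(t)$ times the $L^p$-integral of $hv$; so Theorem~\ref{TQFi} reduces parts (a) and (b) to the claim that the constants $A_1$ (for $p\le q$) and $A_2$ (for $q<p$), computed with $w$ replaced by $w_t$, are equivalent, uniformly in $t$, to the left-hand sides of (\ref{C3QCFe2}) and (\ref{C3QCFe3}), respectively.

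To verify this equivalence I would mimic the simplifications used in Corollary~\ref{C1QCF}, based on Proposition~\ref{svp1}. For the $v$-denominator, assertion (iii) applied to $v^p\in SV$ gives $\int_0^x s^p v^p(s)\frac{ds}{s}\approx x^p v^p(x)$, while assertion (v) applied to $v^p$ yields $v^p(x)\lesssim \int_x^\infty v^p(s)\frac{ds}{s}$; together these collapse the denominator in $A_1,A_2$ to $x^p\int_x^\infty v^p(s)\frac{ds}{s}$ up to constants. For the $w_t$-numerator I would split according to whether $x\le t$ or $x>t$. On $(0,t]$ the integral $\int_0^x s^{q-1}w_t^q(s)\,ds$ vanishes, so the numerator reduces to $x^q\int_t^\infty w^q(s)\frac{ds}{s}$. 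On $(t,\infty)$, $\int_0^x s^{q-1}w_t^q(s)\,ds=\int_t^x s^{q-1}w^q(s)\,ds\le \int_0^x s^{q-1}w^q(s)\,ds\approx x^q w^q(x)\lesssim x^q\int_x^\infty w^q(s)\frac{ds}{s}$, once more by (iii) and (v), and the numerator reduces to $x^q\int_x^\infty w^q(s)\frac{ds}{s}$.

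Assembling the pieces, in part (a) the ratio inside the $\sup$ defining $A_1$ is equivalent to $(\int_t^\infty w^q\frac{ds}{s})^{1/q}/(\int_x^\infty v^p\frac{ds}{s})^{1/p}$ on $(0,t]$, which is non-decreasing in $x$ and, at $x=t$, matches the value of the expression $(\int_x^\infty w^q\frac{ds}{s})^{1/q}/(\int_x^\infty v^p\frac{ds}{s})^{1/p}$ on $(t,\infty)$; hence $A_1$ is equivalent to the supremum in (\ref{C3QCFe2}), and part (a) follows. In part (b), the outer weight in $A_2$ contains the factor $w_t^q(x)=w^q(x)\chi_{(t,\infty)}(x)$, so the outer integration collapses to $(t,\infty)$, and on this range the integrand simplifies by the same estimates precisely to the integrand of (\ref{C3QCFe3}). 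The main technical nuisance is the bound on $\int_t^x s^{q-1}w^q(s)\,ds$ for $x>t$ by the slowly varying tail $x^q\int_x^\infty w^q(s)\frac{ds}{s}$; but this is a routine consequence of Proposition~\ref{svp1}, so no essentially new obstacle arises beyond those already met in Corollary~\ref{C1QCF}.
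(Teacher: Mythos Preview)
Your approach is correct and matches the paper's implied argument: the paper gives no explicit proof for this corollary, but the one-line proof of Corollary~\ref{C1QCF} (``immediately follows from the previous theorem in view of Proposition~\ref{svp1}'') makes clear that the intended method is precisely what you carry out---apply Theorem~\ref{TQFi} to the truncated weight $w_t=w\chi_{(t,\infty)}$ and simplify the resulting expressions for $A_1$, $A_2$ using the slowly varying properties (iii) and (v). Your case split $x\le t$ versus $x>t$ for the numerator, and the observation that the outer factor $w_t^q(x)$ collapses the $A_2$-integral to $(t,\infty)$, are exactly the extra details needed beyond Corollary~\ref{C1QCF}.
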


\begin{theorem} \label{HET1}\cite[Lemma $3.2$]{AEEK}
Let $1< \alpha < \infty,$ and assume that $w$ and $\phi$  are positive   functions on $(0,\infty)$. Put
\begin{equation*}
v(t)=(w(t))^{1-\alpha}\left(  \phi(t)\int_{t}^{\infty}w(u)du\right)^{\alpha}.
\end{equation*}
Then
\begin{equation*}
\int_{0}^{\infty}\left(  \int_{0}^{t}\phi(u)h(u)du\right)^{\alpha}w(t)dt \lesssim
\int_{0}^{\infty}h^{\alpha}(t)v(t)dt
\end{equation*}
holds for all positive  functions $h$ on $(0,\infty)$.
\end{theorem}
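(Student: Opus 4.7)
The plan is to derive this weighted Hardy-type bound by combining integration by parts with H\"older's inequality applied to the conjugate pair $(\alpha,\alpha/(\alpha-1))$. Set $H(t):=\int_0^t \phi(u)h(u)\,du$ and $W(t):=\int_t^\infty w(u)\,du$, so that $v(t)=w(t)^{1-\alpha}\bigl(\phi(t)W(t)\bigr)^\alpha$ and the left-hand side of the stated inequality equals $\int_0^\infty H(t)^\alpha w(t)\,dt$. If the right-hand side is infinite there is nothing to prove, so we may assume it finite, which in particular forces $W(t)<\infty$ for a.e. $t>0$.

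First I would perform the integration by parts
\[
\int_0^\infty H^\alpha w\,dt=-\int_0^\infty H^\alpha W'\,dt=\bigl[-H^\alpha W\bigr]_0^\infty+\alpha\int_0^\infty H^{\alpha-1}\phi h W\,dt,
\]
noting that the boundary term at $t=0$ vanishes because $H(0)=0$ and the one at $t=\infty$ vanishes because $W(\infty)=0$. To make this rigorous I would first work with the truncations $h_n:=h\chi_{[1/n,n]}$, for which $H_n$ is bounded and compactly supported away from the origin so that all intermediate integrals are finite, and recover the full statement by letting $n\to\infty$ via monotone convergence.

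Second I would split the integrand as
\[
H^{\alpha-1}\phi h W=\bigl[H^{\alpha-1}w^{(\alpha-1)/\alpha}\bigr]\cdot\bigl[\phi h W\,w^{-(\alpha-1)/\alpha}\bigr]
\]
and apply H\"older's inequality with exponents $\alpha/(\alpha-1)$ and $\alpha$; a direct computation shows that the first factor raised to $\alpha/(\alpha-1)$ equals $H^\alpha w$, while the second raised to $\alpha$ equals $\phi^\alpha h^\alpha W^\alpha w^{1-\alpha}=h^\alpha v$. This yields
\[
\alpha\int_0^\infty H^{\alpha-1}\phi h W\,dt\leq \alpha\left(\int_0^\infty H^\alpha w\,dt\right)^{\!(\alpha-1)/\alpha}\left(\int_0^\infty h^\alpha v\,dt\right)^{\!1/\alpha},
\]
and combining with the identity of Step~1 and dividing by $(\int H^\alpha w)^{(\alpha-1)/\alpha}$ gives the claimed estimate with implicit constant $\alpha^\alpha$.

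The main obstacle is ensuring the legitimacy of these formal manipulations: both the integration by parts and the division step implicitly rely on the finiteness and non-vanishing of $\int_0^\infty H^\alpha w\,dt$. The truncation $h_n=h\chi_{[1/n,n]}$ is indispensable precisely to secure these properties, and once the boundary-term calculation and the division are justified for $h_n$, monotone convergence in $n$ delivers the inequality for the original $h$.
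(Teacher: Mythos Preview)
Your argument is correct: the combination of integration by parts (rewriting $\int H^\alpha w$ as $\alpha\int H^{\alpha-1}\phi h W$) followed by H\"older's inequality with the conjugate pair $(\alpha',\alpha)$ is the standard route to this weighted Hardy-type estimate, and your truncation $h_n=h\chi_{[1/n,n]}$ together with monotone convergence cleanly handles the finiteness issues needed for the division step and the boundary terms. One minor point worth making explicit is that the finiteness of the right-hand side forces $W(t_0)<\infty$ for \emph{some} $t_0>0$ (since $h,\phi,w>0$), and then $W(t)<\infty$ for all $t\ge t_0$ with $W(t)\to 0$; this is what you implicitly use for the boundary term at infinity after truncation, and it is slightly stronger than the a.e.\ finiteness you state.

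As for comparison with the paper: the paper does not supply a proof of this statement at all---it is quoted verbatim from \cite[Lemma~3.2]{AEEK} and used as a black box. Your self-contained argument is therefore a genuine addition rather than a reproduction, and the approach you take (integration by parts plus H\"older, yielding the explicit constant $\alpha^\alpha$) is precisely the classical one for inequalities of this type.
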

An appropriate change of variable gives us the following variant of the previous theorem.
\begin{theorem} \label{HET2}
Let $1< \alpha < \infty,$ and assume that $w$ and $\phi$  are positive   functions on $(0,\infty)$. Put
\begin{equation*}
v(t)=(w(t))^{1-\alpha}\left(  \phi(t)\int_{0}^{t}w(u)du\right)^{\alpha}.
\end{equation*}
Then
\begin{equation*}
\int_{0}^{\infty}\left(\int_{t}^{\infty}\phi(u)h(u)du\right)^{\alpha}w(t)dt \lesssim
\int_{0}^{\infty}h^{\alpha}(t)v(t)dt
\end{equation*}
holds for all positive  functions $h$ on $(0,\infty)$.
\end{theorem}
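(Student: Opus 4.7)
The statement is explicitly presented as a variant of Theorem \ref{HET1} obtained by an appropriate change of variable, so the plan is to exhibit that substitution and verify the bookkeeping. The reciprocal map $u \mapsto 1/u$ interchanges the tail $(t,\infty)$ with the head $(0,1/t)$, converting the left-hand side of Theorem \ref{HET2} into the form to which Theorem \ref{HET1} directly applies.

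Concretely, I would introduce the transformed data
$$
H(s):=h(1/s),\qquad \Phi(s):=\phi(1/s)/s^{2},\qquad W(s):=w(1/s)/s^{2},
$$
and, via the substitutions $u=1/r$ in the inner integral and $t=1/s$ in the outer integral, verify the identity
$$
\int_{0}^{\infty}\!\left(\int_{t}^{\infty}\!\phi(u)h(u)\,du\right)^{\!\alpha}\!w(t)\,dt
= \int_{0}^{\infty}\!\left(\int_{0}^{s}\!\Phi(r)H(r)\,dr\right)^{\!\alpha}\!W(s)\,ds,
$$
together with the companion identity $\int_s^\infty W(u)\,du = \int_0^{1/s}w(r)\,dr$. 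Applying Theorem \ref{HET1} to the triple $(H,\Phi,W)$ then bounds the right-hand side by $\int_0^\infty H(s)^\alpha V(s)\,ds$, where $V(s) = W(s)^{1-\alpha}\bigl(\Phi(s)\int_s^\infty W(u)\,du\bigr)^{\alpha}$.

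Undoing the substitution $s = 1/t$ and using the companion identity, a direct computation gives
$$
V(1/t) = \bigl(w(t)t^{2}\bigr)^{1-\alpha}\bigl(\phi(t)\,t^{2}\textstyle\int_{0}^{t}w(r)\,dr\bigr)^{\alpha} = t^{2}\,v(t),
$$
after which the Jacobian $ds = dt/t^{2}$ exactly cancels the factor $t^{2}$ and yields the desired bound $\int_0^\infty h(t)^\alpha v(t)\,dt$. The only place that requires care is in tracking how the $s^{-2}$ Jacobian factors interact with the exponents $1-\alpha$ and $\alpha$ in the definition of $v$; otherwise the argument is completely mechanical, with positivity of $w$, $\phi$, $h$ ensuring that the transformed data satisfy the hypotheses of Theorem \ref{HET1}.
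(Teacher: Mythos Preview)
Your proposal is correct and is precisely the approach the paper has in mind: the paper states only that Theorem~\ref{HET2} follows from Theorem~\ref{HET1} by ``an appropriate change of variable,'' and the reciprocal substitution $u\mapsto 1/u$ you carry out is exactly that change. Your bookkeeping of the Jacobian factors and the identity $V(1/t)=t^{2}v(t)$ is accurate.
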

\begin{theorem}\label{HET3+}\cite[Lemma $3.3$]{AEEK}
Let $0< \alpha < 1,$ and assume that $w$ and $\phi$  are positive   functions on $(0,\infty)$. Put
\begin{equation*}
v(t)=\phi(t)\left(\int_{0}^{t}\phi(u)du\right)  ^{\alpha-1}\int_{t}^{\infty
}w(u)du.
\end{equation*}
Then
\begin{equation*}\label{hie1}
\int_{0}^{\infty}\left(  \int_{t}^{\infty}\phi(u)h(u)du\right)^{\alpha}w(t)dt \lesssim
\int_{0}^{\infty}h^{\alpha}(t)v(t)dt
\end{equation*}
holds for all positive, non-increasing  functions $h$ on $(0, \infty).$
\end{theorem}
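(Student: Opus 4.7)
My plan is the classical three-step approach for Hardy-type inequalities in the range $0<\alpha<1$: (i) rewrite the outer power $H(t)^\alpha$ using the fundamental theorem of calculus, (ii) swap the order of integration via Fubini, and (iii) exploit the monotonicity of $h$ to convert the resulting intermediate expression into the target form involving $v$.

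For step (i), since $\alpha\in(0,1)$ and $H(s):=\int_s^\infty\phi(u)h(u)\,du$ is absolutely continuous with $H'(s)=-\phi(s)h(s)$, integration of $(H^\alpha)'=\alpha H^{\alpha-1}H'$ from $t$ to $\infty$ yields the identity $H(t)^\alpha=\alpha\int_t^\infty H(s)^{\alpha-1}\phi(s)h(s)\,ds$ (the boundary term at infinity vanishes since $H(\infty)=0$ whenever LHS is finite, which we may assume after a truncation). For step (ii), substituting this into $\int_0^\infty H(t)^\alpha w(t)\,dt$ and applying Fubini gives
$$\int_0^\infty H(t)^\alpha w(t)\,dt \;=\; \alpha\int_0^\infty H(s)^{\alpha-1}\phi(s)h(s)\Bigl(\int_0^s w(t)\,dt\Bigr)\,ds.$$

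For step (iii), I would exploit that $h$ is non-increasing via a doubling argument on the $\Phi$-scale. Specifically, choose $\sigma(s)$ so that $\Phi(\sigma(s))=2\Phi(s)$; then $h(u)\ge h(\sigma(s))$ on $(s,\sigma(s))$, and hence $H(s)\ge h(\sigma(s))\Phi(s)$. Iterating this observation along the dyadic sequence $\Phi(s_k)=2^k\Phi(s)$ and using the subadditivity $(a+b)^\alpha\le a^\alpha+b^\alpha$ (valid for $\alpha\in(0,1)$) on the corresponding pieces of $H(s)$, one obtains an estimate that effectively transfers the negative power $H(s)^{\alpha-1}$ into a factor comparable to $h(s)^{\alpha-1}\Phi(s)^{\alpha-1}$. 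After substitution the integrand acquires the shape $h(s)^\alpha\phi(s)\Phi(s)^{\alpha-1}$ times a running weight in $s$, and a final Fubini exchange converts the intermediate factor $\int_0^s w$ into the tail $\int_s^\infty w$, reproducing $v(s)=\phi(s)\Phi(s)^{\alpha-1}\int_s^\infty w(u)\,du$.

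The \textbf{main obstacle} lies in step (iii). The naive bound $H(s)\le h(s)\int_s^\infty\phi$ is useless because the negative exponent $\alpha-1<0$ forces one to produce a \emph{lower} bound on $H$, and no pointwise lower bound of $H$ by $h(s)\Phi(s)$ holds without the monotonicity of $h$. Executing the dyadic/level-set reduction and verifying that the resulting geometric sums converge with a constant depending only on $\alpha$ is the delicate part of the argument. As a backup approach I would reduce to the case $\phi\equiv 1$ via the change of variable $\xi=\Phi(t)$ (under which $\phi(t)\Phi(t)^{\alpha-1}dt$ becomes $\xi^{\alpha-1}d\xi$ and $h(\Phi^{-1}(\xi))$ remains non-increasing) and then invoke a Sinnamon-type weighted Hardy inequality for non-increasing functions, whose hypotheses match the reduced form of the present estimate.
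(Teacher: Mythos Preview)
The paper does not prove this statement; it is quoted verbatim from \cite[Lemma~3.3]{AEEK}, so there is no ``paper's proof'' to compare against. More importantly, the version printed here contains a typo: the inner integral on the left-hand side should be $\int_0^t\phi(u)h(u)\,du$, not $\int_t^\infty\phi(u)h(u)\,du$. Three independent checks confirm this. First, the only place the paper uses this result is the proof of Lemma~\ref{APL1}, where one needs to bound $\int_0^\infty\bigl(\int_0^t f^\ast\bigr)^q w(t)\,dt$ with $q<1$; the operator there is $\int_0^t$, not $\int_t^\infty$. Second, the paper derives Theorem~\ref{HET3} from the present statement by the substitution $t\mapsto 1/t$; carrying that substitution through shows consistency only if the inner integral here is $\int_0^t$. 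Third, the printed version is simply false: take $\phi\equiv1$, $\alpha=1/2$, $h(u)=(1+u)^{-2}$ and $w(t)=t^{-5/4}$; then the left side is $\int_0^\infty(1+t)^{-1/2}t^{-5/4}\,dt=\infty$ while $v(t)=4t^{-3/4}$ and the right side equals $4\int_0^\infty(1+t)^{-1}t^{-3/4}\,dt<\infty$.

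Your difficulty in step~(iii) is precisely a symptom of this. After your Fubini step you correctly arrive at the factor $\int_0^s w$, whereas the target weight $v$ carries $\int_s^\infty w$; no ``final Fubini exchange'' can turn one into the other, and your doubling sketch does not address this mismatch either. That is not a flaw in your reasoning but in the statement you were handed.

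For the corrected statement (inner integral $\int_0^t$), your plan (i)--(ii) works and step~(iii) becomes a one-line pointwise bound rather than a dyadic argument. Writing $G(s)=\int_0^s\phi h$, one has $G(s)^\alpha=\alpha\int_0^s G(t)^{\alpha-1}\phi(t)h(t)\,dt$ by the fundamental theorem of calculus. Since $h$ is non-increasing, $G(t)\ge h(t)\Phi(t)$ with $\Phi(t)=\int_0^t\phi$, hence $G(t)^{\alpha-1}\le h(t)^{\alpha-1}\Phi(t)^{\alpha-1}$ and therefore
\[
G(s)^\alpha\le\alpha\int_0^s h(t)^\alpha\phi(t)\Phi(t)^{\alpha-1}\,dt.
\]
Multiplying by $w(s)$, integrating over $s\in(0,\infty)$ and applying Fubini gives exactly $\int_0^\infty h^\alpha v$ on the right. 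No doubling, no level sets, no Sinnamon-type reduction is needed; the monotonicity of $h$ delivers the required \emph{lower} bound on $G$ directly.
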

Again an appropriate change of variable gives us the following variant of the previous theorem.
\begin{theorem} \label{HET3}
Let $0< \alpha < 1,$ and assume that $w$ and $\phi$  are positive   functions on $(0,\infty)$. Put
\begin{equation*}
v(t)=\phi(t)\left(  \int_{t}^{\infty}\phi(u)du\right)  ^{\alpha-1}\int_{0}^{t
}w(u)du.
\end{equation*}
Then
\begin{equation*}\label{hie1}
\int_{0}^{\infty}\left(  \int_{t}^{\infty}\phi(u)h(u)du\right)^{\alpha}w(t)dt \lesssim
\int_{0}^{\infty}h^{\alpha}(t)v(t)dt
\end{equation*}
holds for all positive, non-decreasing  functions $h$ on $(0, \infty).$
\end{theorem}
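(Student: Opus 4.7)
The plan is to establish the inequality directly, by combining integration by parts with a pointwise lower bound on $\int_t^\infty \phi(u) h(u)\, du$ that is available precisely when $h$ is non-decreasing; this parallels the structure of Theorem \ref{HET3+} but with all comparisons reversed. The substitution $u \mapsto 1/u$ alluded to by the phrase ``appropriate change of variable'' formally maps a non-decreasing $h$ to a non-increasing one and exchanges integrals over $(0,t)$ with those over $(t,\infty)$, which accounts for why the weight $v$ in Theorem \ref{HET3} features $\int_t^\infty \phi$ and $\int_0^t w$ in the places where the weight in Theorem \ref{HET3+} features $\int_0^t \phi$ and $\int_t^\infty w$.

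Concretely, I will set $G(t):=\int_t^\infty \phi(u)\,h(u)\,du$, $\Phi(t):=\int_t^\infty \phi(u)\,du$, and $W(t):=\int_0^t w(u)\,du$, so that $v = \phi\,\Phi^{\alpha-1}\,W$. The non-decreasing property of $h$ yields the key lower bound
\[
G(t) \geq h(t)\,\Phi(t),
\]
since $h(u)\geq h(t)$ for $u\geq t$. Integrating by parts $\int_0^\infty G^\alpha\,dW$, using $W'=w$ and $(G^\alpha)'(t) = -\alpha\,\phi(t)\,h(t)\,G(t)^{\alpha-1}$, produces
\[
\int_0^\infty G^\alpha(t)\,w(t)\,dt = \bigl[G^\alpha W\bigr]_0^\infty + \alpha \int_0^\infty \phi(t)\,h(t)\,W(t)\,G(t)^{\alpha-1}\,dt,
\]
with the lower boundary vanishing because $W(0)=0$. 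Since $0<\alpha<1$, the map $x\mapsto x^{\alpha-1}$ is decreasing, so the lower bound on $G$ upgrades to the pointwise estimate $G^{\alpha-1}\leq h^{\alpha-1}\Phi^{\alpha-1}$. Substituting this into the identity above collapses the right-hand side into
\[
\alpha \int_0^\infty h^\alpha(t)\,\phi(t)\,\Phi(t)^{\alpha-1}\,W(t)\,dt \; = \; \alpha \int_0^\infty h^\alpha(t)\,v(t)\,dt,
\]
which is the claim with implicit constant $\alpha$.

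The main obstacle will be the upper boundary term $\lim_{T\to\infty} G(T)^\alpha W(T)$, which is not automatically zero for arbitrary positive $\phi, w, h$. The standard remedy is to perform the integration by parts on $(0,T)$ with finite $T$, derive the truncated estimate, and then pass to the limit $T\to\infty$: if the right-hand side of the target inequality is infinite there is nothing to prove, and when it is finite a routine dominated-convergence argument together with the truncated bound will force the boundary contribution to vanish in the limit.
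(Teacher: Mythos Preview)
Your direct argument is correct and essentially reproduces, in mirrored form, what the proof of Theorem~\ref{HET3+} in \cite{AEEK} must look like. The paper, by contrast, gives no details at all: it simply asserts that Theorem~\ref{HET3} follows from Theorem~\ref{HET3+} via the substitution $t\mapsto 1/t$, which you yourself identify in your opening paragraph. So your route is genuinely different---self-contained rather than reductive. Your approach makes transparent exactly where the monotonicity of $h$ enters (through the pointwise bound $G\ge h\Phi$ and the reversal induced by $\alpha-1<0$), whereas the paper's one-line reduction is shorter but leans entirely on the cited external result.

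One small caveat: your handling of the boundary term at $T\to\infty$ is a little vague. ``Dominated convergence'' is not quite the right tool here; what actually closes the argument is to recycle the very same pointwise bound. Writing $G(T)^\alpha=\alpha\int_T^\infty G(s)^{\alpha-1}\phi(s)h(s)\,ds$ and applying $G^{\alpha-1}\le (h\Phi)^{\alpha-1}$ once more yields
\[
G(T)^\alpha W(T)\;\le\;\alpha\,W(T)\int_T^\infty h^\alpha\phi\,\Phi^{\alpha-1}\;\le\;\alpha\int_T^\infty h^\alpha v,
\]
which tends to $0$ whenever $\int_0^\infty h^\alpha v<\infty$. With this in place your truncated estimate passes to the limit cleanly and gives the inequality with implicit constant~$\alpha$.
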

\begin{theorem}\label{hmt1}\cite[Theorem $3.3$ (b)]{HM}
Let $0<\alpha\leq 1.$ Assume that  $w$ and $v$ are non-negative functions on $(0,\infty),$ and  $\psi$ is a non-negative  function on  $(0,\infty)\times (0,\infty).$ Then
\begin{equation}\label{hme1}
\int_{0}^{\infty}\left(\int_{0}^{\infty}\psi(t, u)h(u)du\right)  ^{\alpha}w(t)dt\lesssim
\int_{0}^{\infty}h^{\alpha}(t)v(t)dt%
\end{equation}
holds for all non-negative, non-decreasing functions $h$ on $(0, \infty)$ if and only if

\begin{equation}\label{hme2}
\int_{0}^{\infty}\left(\int_{x}^{\infty}\psi(t, u)du\right)  ^{\alpha}w(t)dt\lesssim
\int_{x}^{\infty}v(t)dt%
\end{equation}
holds for all $x>0.$
\end{theorem}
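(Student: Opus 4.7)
The necessity of (\ref{hme2}) is immediate by testing (\ref{hme1}) against $h = \chi_{(x,\infty)}$, which is non-negative and non-decreasing; both sides of (\ref{hme1}) then collapse to the two sides of (\ref{hme2}).

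For sufficiency, the plan is a dyadic level-set decomposition of $h$ that reduces the general inequality to its tested version. By monotone convergence applied to $h_M := \min(h, M)$, one may assume $h$ is bounded. For $n \in \mathbb{Z}$, set
$$x_n := \inf\{u > 0 : h(u) > 2^n\}, $$
so that $\{x_n\}$ is non-decreasing in $n$. Since $h(u) \leq 2^n$ on $(x_{n-1}, x_n]$, a short telescoping via $\chi_{(x_{n-1}, x_n]} = \chi_{(x_{n-1}, \infty)} - \chi_{(x_n, \infty)}$ yields the pointwise majorant
$$h(u) \leq \sum_{n \in \mathbb{Z}} 2^{n-1}\,\chi_{(x_{n-1}, \infty)}(u), \qquad u > 0. $$

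Substituting this into the inner integral on the left of (\ref{hme1}) and invoking the subadditivity $(\sum a_k)^\alpha \leq \sum a_k^\alpha$, valid precisely for $0 < \alpha \leq 1$, gives
$$\left(\int_0^\infty \psi(t,u)\,h(u)\,du\right)^{\!\alpha} \leq \sum_n 2^{(n-1)\alpha}\left(\int_{x_{n-1}}^\infty \psi(t,u)\,du\right)^{\!\alpha}.$$
Integrating against $w(t)\,dt$ and applying the hypothesis (\ref{hme2}) at each $x = x_{n-1}$ yields
$$\int_0^\infty \left(\int_0^\infty \psi(t,u)\,h(u)\,du\right)^{\!\alpha} w(t)\,dt \;\lesssim\; \sum_n 2^{(n-1)\alpha}\,V(x_{n-1}), $$
where $V(x) := \int_x^\infty v(t)\,dt$.

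To close the argument I compare with $\int h^\alpha v\,dt$: the lower bound $h \geq 2^{n-1}$ on $(x_{n-1}, x_n]$ gives
$$\int_0^\infty h(t)^\alpha v(t)\,dt \;\geq\; \sum_n 2^{(n-1)\alpha}\bigl(V(x_{n-1}) - V(x_n)\bigr),$$
and the index shift $n \mapsto n+1$ in the second piece reveals that this sum equals $(1 - 2^{-\alpha})\sum_n 2^{(n-1)\alpha} V(x_{n-1})$. Since $\alpha > 0$, the factor $1 - 2^{-\alpha}$ is strictly positive, so chaining the two estimates produces (\ref{hme1}) with an explicit constant of order $(1-2^{-\alpha})^{-1}$. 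The step requiring most care will be handling the tails of the dyadic sums as $n \to \pm\infty$ together with the reduction to bounded $h$: for $h$ bounded (and, if necessary, further truncated to be compactly supported) only finitely many $x_n$ are both finite and nonzero, making the telescoping and Abel rearrangement rigorous, and the general case then follows by the monotone convergence reduction already noted.
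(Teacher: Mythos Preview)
The paper does not supply its own proof of this theorem: it is quoted verbatim from Heinig--Maligranda \cite[Theorem~3.3\,(b)]{HM} and used as a black box in the proof of Theorem~\ref{ReT1}. So there is no in-paper argument to compare against.

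Your proposal is essentially the standard proof of this kind of result and is correct in substance. The necessity is immediate as you say. For sufficiency, the dyadic level-set decomposition, the subadditivity of $t\mapsto t^\alpha$ for $0<\alpha\le1$, and the Abel rearrangement are exactly the right ingredients, and your constant $(1-2^{-\alpha})^{-1}$ is what one gets.

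One point of phrasing needs correction: a non-negative, non-decreasing function on $(0,\infty)$ cannot be compactly supported unless it is identically zero, so ``truncated to be compactly supported'' is not the right reduction. What you actually want is to replace $h$ by $h\cdot\chi_{(a,\infty)}$ for $a>0$ (which \emph{is} still non-decreasing) and let $a\downarrow0$ via monotone convergence; together with the truncation $h_M=\min(h,M)$ this forces all but finitely many $x_n$ to coincide (equal to $a$ for small $n$, equal to $+\infty$ for large $n$), so the Abel summation is over a genuinely finite range and the boundary terms are controlled by the convergent geometric tail $\sum_{n\le N}2^{(n-1)\alpha}V(a)$. With that adjustment the argument is complete.
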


\section{Holmstedt-type formulae}
This section contains our main results. In order to describe our results, we need the following class of compatible couples.
\begin{definition}\label{DKS}\cite[p. 217]{N} \;We say a compatible couple $(A_0,A_1)$ of quasi-normed spaces is $K$-surjective if for every quasi-concave function $\phi$, there exists $f\in A_0+A_1$ such that $$\phi(t)\approx K(t,f),\;\; t>0.$$
\end{definition}
\subsection{The case $\theta_0=\theta_1=0$}
Let $0<q_j\leq\infty$ and $b_j\in SV_{0,q_j}$ $(j=0,1).$ Put
$$
\rho(t)=\frac{\|u^{-1/q_0}b_0(u)\|_{q_0,(t,\infty)}}{\|u^{-1/q_1}b_1(u)\|_{q_1,(t,\infty)}},
$$
$$I(t,f)=\|u^{-1/q_0}b_0(u)K(u,f;A_0,A_1)\|_{q_0, (0,t)},$$
and
$$ J(t,f)=\|u^{-1/q_1}b_1(u)K(u,f;A_0,A_1)\|_{q_1, (t,\infty)}.$$
Moreover, let  $\epsilon>0$ and set
$$
\rho_{\epsilon}(t)=\frac{\|u^{-1/q_0}b_0(u)\|_{q_0,(t,\infty)}^{1+\epsilon}}{\|u^{-1/q_1}b_1(u)\|_{q_1,(t,\infty)}}.
$$
\begin{theorem}\label{HomET1} Let $0<q_0,q_1 \leq\infty$, $q_0\neq q_1,$  and  $b_j \in SV_{0,q_j}$  (j=0,1). Assume that the following condition is met:
\begin{equation}\label{HE1e1}
 \rho_\epsilon\; \text{is equivalent to a non-decreasing function for some}\; \epsilon>0.
 \end{equation}
Then, for all $f\in A_0+A_1$ and all $t>0,$ we have
\begin{equation}\label{kfe1}
K(\rho(t),f; \bar{A}_{0,q_0; b_0}, \bar{A}_{0,q_1;b_1} ) \approx I(t,f)+ \rho(t) J(t,f).
\end{equation}
Moreover, the condition (\ref{HE1e1}) is also necessary  provided that the given   couple $(A_0,A_1)$ is $K$-surjective.
\end{theorem}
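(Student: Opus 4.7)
The plan is to prove the theorem in three parts: the upper bound in (\ref{kfe1}), the lower bound in (\ref{kfe1}), and the necessity of (\ref{HE1e1}) under $K$-surjectivity.

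For the upper bound, I fix $t>0$ and, given $f\in A_0+A_1$, pick a near-optimal $K$-decomposition $f=f_0+f_1$ with $\|f_0\|_{A_0}+t\|f_1\|_{A_1}\leq 2K(t,f)$, setting $g_0=f_0$, $g_1=f_1$. Splitting each quasi-norm integral at $u=t$ and using the elementary bounds $K(u,g_0)\leq K(u,f)+u\|g_1\|_{A_1}$, $K(u,g_1)\leq u\|g_1\|_{A_1}$ on $(0,t)$ and $K(u,g_0)\leq\|g_0\|_{A_0}$, $K(u,g_1)\leq K(u,f)+\|g_0\|_{A_0}$ on $(t,\infty)$, the resulting integrals of slowly varying functions are evaluated via Proposition~\ref{svp1}. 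Two simple consequences of the quasi-concavity of $K(u,f)$, namely $K(t,f)b_0(t)\lesssim I(t,f)$ and $K(t,f)\|u^{-1/q_1}b_1(u)\|_{q_1,(t,\infty)}\lesssim J(t,f)$, allow consolidation, yielding $\|g_0\|_{\bar{A}_{0,q_0;b_0}}\lesssim I(t,f)+\rho(t)J(t,f)$ and $\|g_1\|_{\bar{A}_{0,q_1;b_1}}\lesssim J(t,f)$, and hence the upper bound. Note that (\ref{HE1e1}) is not needed in this direction.

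For the lower bound, fix any decomposition $f=g_0+g_1$. Subadditivity of the $K$-functional together with a (quasi-)triangle inequality gives
\begin{equation*}
I(t,f)+\rho(t)J(t,f)\lesssim\|g_0\|_{\bar{A}_{0,q_0;b_0}}+\rho(t)\|g_1\|_{\bar{A}_{0,q_1;b_1}}+I(t,g_1)+\rho(t)J(t,g_0),
\end{equation*}
so it remains to verify the two cross inequalities $I(t,g_1)\lesssim\rho(t)\|g_1\|_{\bar{A}_{0,q_1;b_1}}$ and $\rho(t)J(t,g_0)\lesssim\|g_0\|_{\bar{A}_{0,q_0;b_0}}$; taking the infimum over decompositions then delivers the lower bound. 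Since $K(u,g_j)$ is quasi-concave, these are instances of Corollaries~\ref{C2QCF} and~\ref{C3QCF}. In the case $q_0<q_1$, the first inequality reduces via Corollary~\ref{C2QCF}(b) to an integral condition on $(0,t)$, while the second reduces via Corollary~\ref{C3QCF}(a) to the supremum condition $\sup_{x>t}\rho(x)^{-1}\lesssim\rho(t)^{-1}$; the case $q_1<q_0$ is symmetric. For necessity, I use $K$-surjectivity: for every quasi-concave $\phi$ pick $f$ with $K(u,f)\approx\phi(u)$, feed $f$ into (\ref{kfe1}), and use the trivial bounds $K(\rho(t),f;\bar{A}_{0,q_0;b_0},\bar{A}_{0,q_1;b_1})\leq\|f\|_{\bar{A}_{0,q_0;b_0}}$ and $K(\rho(t),f;\bar{A}_{0,q_0;b_0},\bar{A}_{0,q_1;b_1})\leq\rho(t)\|f\|_{\bar{A}_{0,q_1;b_1}}$ to reproduce the two cross inequalities with $K(u,f)$ replaced by an arbitrary quasi-concave $\phi$. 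The necessity halves of Corollaries~\ref{C2QCF} and~\ref{C3QCF} then force the same weight conditions, from which (\ref{HE1e1}) is extracted.

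The main technical step is the dictionary between the integral-type conditions in Corollaries~\ref{C2QCF}(b) and~\ref{C3QCF}(b) and the monotonicity of $\rho_\epsilon$. Writing $B_j(x)=\|u^{-1/q_j}b_j(u)\|_{q_j,(x,\infty)}^{q_j}$, one has the clean identity $\rho_\epsilon(x)=\rho(x)^{1+\epsilon}B_1(x)^{\epsilon/q_1}$ in the case $q_0<q_1$ (symmetrically otherwise), which shows that $\rho_\epsilon$ being quasi-non-decreasing forces $\rho$ quasi-non-decreasing and, moreover, yields the quantitative estimate $\rho(x)\lesssim\rho(t)(B_1(t)/B_1(x))^{\delta}$ for $x\leq t$ with $\delta=\epsilon/(q_1(1+\epsilon))$. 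Substituting $B_0=\rho^{q_0}B_1^{q_0/q_1}$ and using $b_0^{q_0}(x)\,dx/x=-dB_0(x)$, the integrand in Corollary~\ref{C2QCF}(b) splits as a combination of $d(\rho^{rq_1})$ and $\rho^{rq_1}(-d\log B_1)$ with $r=q_0/(q_1-q_0)$; the first gives a boundary term bounded by $\rho(t)^{rq_1}$, while the second is controlled by $\rho(t)^{rq_1}$ using the quantitative estimate above and the calculus identity $\int_0^tB_1(x)^{-a-1}(-dB_1(x))\leq B_1(t)^{-a}/a$ with $a=\delta rq_1$. The reverse direction required in the necessity argument runs this same calculus backwards to produce an admissible $\epsilon$; extracting such an $\epsilon$ from the mere integral bound is where the bulk of the technical work lies.
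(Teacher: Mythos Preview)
Your upper bound and the sufficiency half of the lower bound are along the right lines and parallel the paper. For sufficiency the paper actually takes a shorter route than you do: instead of invoking the sharp Corollaries~\ref{C2QCF}--\ref{C3QCF}, it uses only the crude pointwise bound $\|g\|_{\bar A_{0,q_j;b_j}}\geq K(x,g)\,\|u^{-1/q_j}b_j(u)\|_{q_j,(x,\infty)}$, which reduces the two cross inequalities directly to the weight conditions (\ref{kfeb4}) and (\ref{kfeb5}); each of these is then verified from (\ref{HE1e1}) by a two-line computation (pull out the monotone factor $\rho_\epsilon^{q_0}$ and integrate $B_0^{-1-\epsilon}\,dB_0$). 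Your integration-by-parts splitting with $d(\rho^{rq_1})$ and $\rho^{rq_1}(-d\log B_1)$ would also work but is unnecessarily heavy.

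The genuine gap is in the necessity direction. You correctly arrive, via $K$-surjectivity and the necessity halves of the Corollaries, at an integral weight condition---in the case $q_1<q_0$ this is exactly (\ref{kfeb8Feb})---but you then assert that an admissible $\epsilon$ can be produced by ``running this same calculus backwards''. This is not a reversal of the forward computation; passing from an integrated bound to pointwise quasi-monotonicity of $\rho_\epsilon$ needs a separate idea. The paper rewrites (\ref{kfeb8Feb}) as $(Q\sigma)(t)\leq c\,\sigma(t)$ for the operator
\[
(Qw)(t)=\int_t^\infty w(x)\,B(x)^{-1}b_1^{q_1}(x)\,\frac{dx}{x},\qquad B(t)=\int_t^\infty b_1^{q_1}(s)\,\frac{ds}{s},
\]
with $\sigma=\rho^{-q_0q_1/(q_0-q_1)}$, iterates to $(Q^{k+1}\sigma)\leq c^{k+1}\sigma$, identifies the kernel of $Q^{k+1}$ as $\tfrac{1}{k!}\log^k\!\bigl(B(t)/B(x)\bigr)$, and sums over $k$ with a small geometric weight to obtain the kernel $\bigl(B(t)/B(x)\bigr)^{\epsilon}$. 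This yields
\[
\int_t^\infty \sigma(x)\,B(x)^{-\epsilon-1}b_1^{q_1}(x)\,\frac{dx}{x}\ \lesssim\ \sigma(t)\,B(t)^{-\epsilon},
\]
and together with the easy reverse inequality one concludes that $\sigma(t)B(t)^{-\epsilon}$ is equivalent to a tail integral, hence to a non-increasing function; translating back gives~(\ref{HE1e1}). Your sketch supplies no mechanism of this kind, so the necessity claim is, as written, unproved.
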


\begin{proof}  First assume that the condition (\ref{HE1e1}) is met.
According to the estimate (2.30) in \cite[Theorem 2.3]{AEEK}, we have the following estimate from below:
\begin{align}\label{kfea1T3}
K(\rho(t),f; \bar{A}_{0,q_0; b_0}, \bar{A}_{0,q_1;b_1} ) &\lesssim  I(t,f)+ \rho(t) J(t,f)+\rho(t)b_1(t)K(t,f)\notag\\
 &\;\;\;\;+ \|u^{-1/q_0}b_0(u)\|_{q_0,(t,\infty)}K(t,f).
\end{align}
Since $t\mapsto K(t,f)$ is non-decreasing, we obtain
\begin{equation}\label{kfea2}
J(t,f)\geq K(t,f)\|u^{-1/q_1}b_1(u)\|_{q_1,(t,\infty)},
\end{equation}
whence we get
\begin{equation}\label{kfea3}
\rho(t)J(t,f)\geq  \|u^{-1/q_0}b_0(u)\|_{q_0,(t,\infty)}K(t,f).
\end{equation}
Moreover, by Proposition \ref{svp1} (vi), (\ref{kfea2}) gives
\begin{equation}\label{kfea4}
J(t,f)\gtrsim K(t,f)b_1(t).
\end{equation}
Now, in view of (\ref{kfea3}) and (\ref{kfea4}), the estimate $``\lesssim"$ in  (\ref{kfe1})  follows from (\ref{kfea1T3}).  In order  to  establish the converse estimate $``\gtrsim"$, it will suffice to show that the following estimates
\begin{equation*}\label{ckt1e1}
I(t,f)\lesssim \|f_0\|_{\bar{A}_{0,q_0; b_0}}+ \rho(t)\|f_1\|_{\bar{A}_{0,q_1; b_1}},
\end{equation*}
and
\begin{equation*}\label{ckt1e2}
\rho(t)J(t,f)\lesssim \|f_0\|_{\bar{A}_{0,q_0; b_0}}+ \rho(t)\|f_1\|_{\bar{A}_{0,q_1; b_1}},
\end{equation*}
hold for  an arbitrary decomposition $f=f_0+f_1$ with $f_j\in A_j$ $ (j=0,1)$.  As  $K(u,f)\lesssim K(u,f_0)+K(u,f_1)$, we have $$I(t,f)\lesssim I(t,f_0)+I(t,f_1),$$ and $$J(t,f)\lesssim J(t,f_0)+J(t,f_1).$$
Clearly,  $I(t,f_0)\leq \|f_0\|_{\bar{A}_{0,q_0; b_0}}$ and $J(t,f_1)\leq \|f_1\|_{\bar{A}_{0,q_1; b_1}}.$  Therefore, it remains to show that
\begin{equation}\label{kfeb1}
I(t,f_1)\lesssim \rho(t)\|f_1\|_{\bar{A}_{0,q_1; b_1}},
\end{equation}
and
\begin{equation}\label{kfeb2}
\rho(t)J(t,f_0)\lesssim \|f_0\|_{\bar{A}_{0,q_0; b_0}}.
\end{equation}
Since
\begin{equation}\label{kfeb3}
\|f_j\|_{\bar{A}_{0,q_j; b_j}}\geq K(x,f_j)\|u^{-1/q_j}b_j(u)\|_{q_j,(x,\infty)},\;\;x>0,
\end{equation}
it follows that  the estimate (\ref{kfeb1}) holds if the following  condition is met:
\begin{equation}\label{kfeb4}
\left\|\frac{x^{-1/q_0}b_0(x)}{\|s^{-1/q_1}b_1(s)\|_{q_1,(x,\infty)}}\right\|_{q_0, (0,t)}\lesssim \rho(t),
\end{equation}
while  the estimate (\ref{kfeb2}) holds if the following  condition is met:
\begin{equation}\label{kfeb5}
\left\|\frac{x^{-1/q_1}b_1(x)}{\|s^{-1/q_0}b_0(s)\|_{q_0,(x,\infty)}}\right\|_{q_1, (t,\infty)}\lesssim 1/\rho(t).
\end{equation}
Next let us derive the estimate (\ref{kfeb4}) using  the condition  (\ref{HE1e1}).  We consider only the case when $q_0<\infty$ since the case $q_0=\infty$ is analogous and easier. Observe that
\begin{eqnarray*}
&&\int_{0}^{t}\frac{b_0^{q_0}(x)}{\left(\int_{x}^{\infty}b_1^{q_1}(s)\frac{ds}{s}\right)^{q_0/q_1}}\frac{dx}{x}\\ &\lesssim&
\frac{\left(\int_{t}^{\infty}b_0^{q_0}(s)\frac{ds}{s}\right)^{1+\epsilon }}{\left(\int_{t}^{\infty}b_1^{q_1}(s)\frac{ds}{s}\right)^{q_0/q_1}}\int_{0}^{t}\left(\int_{x}^{\infty}b_0^{q_0}(s)\frac{ds}{s}\right)^{-\epsilon -1}b_0^{q_0}(x)\frac{dx}{x}\\ &\approx&\frac{\int_{t}^{\infty}b_0^{q_0}(s)\frac{ds}{s}}{\left(\int_{t}^{\infty}b_1^{q_1}(s)\frac{ds}{s}\right)^{q_0/q_1}},
\end{eqnarray*}
whence we get (\ref{kfeb4}). Next we show that  (\ref{kfeb5}) also follows from the condition  (\ref{HE1e1}).  Again we consider only the case when $q_1<\infty.$ This time we observe that
\begin{eqnarray*}
&&\int_{t}^{\infty}\frac{b_1^{q_1}(x)}{\left(\int_{x}^{\infty}b_0^{q_0}(s)\frac{ds}{s}\right)^{q_1/q_0}}\frac{dx}{x}\\ &\lesssim&
\frac{\left(\int_{t}^{\infty}b_1^{q_1}(s)\frac{ds}{s}\right)^{\frac{1}{1+\epsilon} }}{\left(\int_{t}^{\infty}b_0^{q_0}(s)\frac{ds}{s}\right)^{q_1/q_0}}\int_{t}^{\infty}\left(\int_{x}^{\infty}b_1^{q_1}(s)\frac{ds}{s}\right)^{\frac{-1}{1+\epsilon} }b_1^{q_1}(x)\frac{dx}{x}\\ &\approx&\frac{\int_{t}^{\infty}b_1^{q_1}(s)\frac{ds}{s}}{\left(\int_{t}^{\infty}b_0^{q_0}(s)\frac{ds}{s}\right)^{q_1/q_0}},
\end{eqnarray*}
whence we get (\ref{kfeb5}). The proof of the sufficiency of the condition (\ref{HE1e1}) is complete.\\

 Next assume that the estimate (\ref{kfe1}) holds for all $A_0+A_1$ and $t>0. $ We distinguish two cases: $q_1<q_0$ and $q_0<q_1$. First we treat the case $q_1<q_0$. Taking a particular decomposition $f=f+0$, $f\in \bar{A}_{0,q_0; b_0}$ and $0\in \bar{A}_{1,q_1; b_1}$, we obtain
\begin{equation*}\label{nce1}
\rho(t)J(t,f)\lesssim \|f\|_{\bar{A}_{0,q_0; b_0}},
\end{equation*}
from which, according to Corollary \ref{C3QCF} (b), it follows that
\begin{equation}\label{kfeb8Feb}
\left(\int_{t}^{\infty}\frac{\left(\int_{x}^{\infty}b_1^{q_1}(s)\frac{ds}{s}\right)^{\frac{q_1}{q_0-q_1}}}{\left(\int_{x}^{\infty}b_0^{q_0}(s)\frac{ds}{s}\right)^{\frac{q_1}{q_0-q_1}}}b_1^{q_1}(x)\frac{dx}{x}\right)^{1/q_1-1/q_0}\lesssim 1/\rho(t).
\end{equation}
Next we introduce the operator
\begin{equation*}
(Q w)(t)=\int_{t}^{\infty}w(x)\left(\int_{x}^{\infty}b_1^{q_1}(s)\frac{ds}{s}\right)^{-1}b_1^{q_1}(x)\frac{dx}{x}.
\end{equation*}
Set temporarily  $$\sigma=\left[\frac{1}{\rho}\right]^{\frac{q_1q_0}{q_0-q_1}}$$
and
$$B(t)=\int_{t}^{\infty}b_1^{q_1}(s)\frac{ds}{s}.$$
For each $k\in \mathbb{N}$, define $Q^{k+1}=Q(Q^k).$ Then
 \begin{equation}\label{nce2}
(Q^{k+1} w)(t)=\frac{1}{k!}\int_{t}^{\infty}w(x)\ln^k\left[\frac{B(t)}{B(x)}\right]\left[B(x)\right]^{-1}b_1^{q_1}(x)\frac{dx}{x},\;\;k\in \mathbb{N}.
\end{equation}
Moreover we see that (\ref{kfeb8Feb}) translates into
\begin{equation*}
(Q\sigma)(t)\leq c\sigma(t)
\end{equation*}
for some constant $c>0.$ Therefore, we have
\begin{equation*}
(Q^{k+1}\sigma)(t)\leq c^{k+1}\sigma(t),\;\; k\in \mathbb{N},
\end{equation*}
which, in view of (\ref{nce2}),  leads us to
\begin{equation}\label{nce3}
\frac{1}{k!}\int_{t}^{\infty}\sigma(x)\ln^k\left[\frac{B(t)}{B(x)}\right]\left[B(x)\right]^{-1}b_1^{q_1}(x)\frac{dx}{x}\leq c^{k+1}\sigma(t),\;\;k\in\mathbb{N}.
\end{equation}
We choose $\epsilon>0$ such that $\max(\epsilon c,\epsilon)<1.$ Then by (\ref{nce3}), we obtain
\begin{equation*}
\int_{t}^{\infty}\sigma(x)\sum\limits_{k=0}^{\infty}\frac{\ln^k\left[\frac{B(t)}{B(x)}\right]^{\epsilon}}{k!}\left[B(x)\right]^{-1}b_1^{q_1}(x)\frac{dx}{x}\leq c\sum\limits_{k=0}^{\infty}(\epsilon c)^{k}\sigma(t),
\end{equation*}
whence we get
\begin{equation*}
\int_{t}^{\infty}\sigma(x)\left[\frac{B(t)}{B(x)}\right]^{\epsilon}\left[B(x)\right]^{-1}b_1^{q_1}(x)\frac{dx}{x}\leq \frac{c}{1-\epsilon c}\sigma(t),
\end{equation*}
or,
\begin{equation}\label{nce4}
\int_{t}^{\infty}\sigma(x)\left[B(x)\right]^{-\epsilon-1}b_1^{q_1}(x)\frac{dx}{x}\leq \frac{c}{1-\epsilon c}\sigma(t)\left[B(t)\right]^{-\epsilon}.
\end{equation}
Now the converse   estimate
\begin{equation}\label{nce5}
\int_{t}^{\infty}\sigma(x)\left[B(x)\right]^{-\epsilon-1}b_1^{q_1}(x)\frac{dx}{x}\gtrsim\sigma(t)\left[B(t)\right]^{-\epsilon}
\end{equation}
holds as well. Indeed,
\begin{align*}
\int_{t}^{\infty}\sigma(x)\left[B(x)\right]^{-\epsilon-1}b_1^{q_1}(x)\frac{dx}{x}&\gtrsim\left(\int_{t}^{\infty}b_0^{q_0}(s)\frac{ds}{s}\right)^{\frac{q_1}{q_1-q_0}}\int_{t}^{\infty}\left[B(x)\right]^{\frac{q_1q_0}{q_0-q_1}-\epsilon-1}b_1^{q_1}(x)\frac{dx}{x}\\
&\approx\left(\int_{t}^{\infty}b_0^{q_0}(s)\frac{ds}{s}\right)^{\frac{q_1}{q_1-q_0}}\left[B(x)\right]^{\frac{q_1q_0}{q_0-q_1}-\epsilon},
\end{align*}
whence we get (\ref{nce5}). From (\ref{nce4}) and (\ref{nce5}), it follows that
\begin{equation*}\label{nce5}
\int_{t}^{\infty}\sigma(x)\left[B(x)\right]^{-\epsilon-1}b_1^{q_1}(x)\frac{dx}{x}\approx\sigma(t)\left[B(t)\right]^{-\epsilon}
\end{equation*}
which shows that $t\mapsto \sigma(t)\left[B(t)\right]^{-\epsilon}$ is equivalent to a non-increasing function. That is,
$$t\mapsto \left[\frac{1}{\rho(t)}\right]^{\frac{q_1q_0}{q_0-q_1}}\left(\int_{t}^{\infty}b_1^{q_1}(s)\frac{ds}{s}\right)^{-\epsilon} $$
is equivalent to a non-increasing function, or,
$$t\mapsto \rho(t)\left(\int_{t}^{\infty}b_1^{q_1}(s)\frac{ds}{s}\right)^{\frac{\epsilon(q_0-q_1)}{q_0q_1}} $$
is equivalent to a non-decreasing function. It follows that
$$t\mapsto \rho(t)\left(\int_{t}^{\infty}b_0^{q_0}(s)\frac{ds}{s}\right)^{\frac{\epsilon(q_0-q_1)}{q_0(q_0-\epsilon(q_0-q_1))}} $$
is equivalent to a non-decreasing function since
$$\rho(t)\left(\int_{t}^{\infty}b_0^{q_0}(s)\frac{ds}{s}\right)^{\frac{\epsilon(q_0-q_1)}{q_0(q_0-\epsilon(q_0-q_1))}}=\left(\rho(t)\left(\int_{t}^{\infty}b_1^{q_1}(s)\frac{ds}{s}\right)^{\frac{\epsilon(q_0-q_1)}{q_0q_1}}\right)^{\frac{q_0}{q_0-\epsilon(q_0-q_1)}}.$$
Thus the condition  (\ref{HE1e1}) is valid in the case when $q_1<q_0.$ As for the case $q_0<q_1$, we take a particular decomposition $f=0+f$, $0\in \bar{A}_{0,q_0; b_0}$ and $f\in \bar{A}_{1,q_1; b_1}$ to get
\begin{equation*}\label{kfeb1Feb2}
I(t,f)\lesssim \rho(t)\|f\|_{\bar{A}_{0,q_1; b_1}},
\end{equation*}
from which, according to Corollary \ref{C2QCF} (b), it follows that
\begin{equation}\label{kfeb6}
\left(\int_{0}^{t}\frac{\left(\int_{x}^{\infty}b_0^{q_0}(s)\frac{ds}{s}\right)^{\frac{q_0}{q_1-q_0}}}{\left(\int_{x}^{\infty}b_1^{q_1}(s)\frac{ds}{s}\right)^{\frac{q_0}{q_1-q_0}}}b_0^{q_0}(x)\frac{dx}{x}\right)^{1/q_0-1/q_1}\lesssim \rho(t).
\end{equation}
This time we introduce the  operator
\begin{equation*}
(P w)(t)=\int_{0}^{t}w(x)\left(\int_{x}^{\infty}b_0^{q_0}(s)\frac{ds}{s}\right)^{-1}b_1^{q_0}(x)\frac{dx}{x},
\end{equation*}
and using a similar argument as in the case $q_1<q_0$ we can conclude that the condition  (\ref{HE1e1}) is also  valid in the case when $q_0<q_1$. This completes the proof of the theorem.
\end{proof}
\begin{remark}\label{R1}
{\em The argument for the estimate $``\lesssim"$ in  (\ref{kfe1}) also works when $q_0=q_1$. Moreover,  the condition (\ref{HE1e1}) is only required for the estimate $``\gtrsim"$ in  (\ref{kfe1}). }
\end{remark}
\begin{remark}\label{R2}
{\em  The converse estimate in  (\ref{kfeb5}) holds trivially.}
\end{remark}
Next we treat the case $q_0=q_1.$
\begin{theorem}\label{HomET2} Let $0<p\leq\infty$ and $b_j\in SV_{0,p}$. Assume that  $\rho$ is increasing. In the case $p< \infty$, assume additionally that  the given   couple $(A_0,A_1)$ is $K$-surjective.   Then, for all $f\in A_0+A_1$ and all $t>0,$ we have
\begin{equation}\label{kfe2}
K(\rho(t),f; \bar{A}_{0,p; b_0}, \bar{A}_{0,p;b_1} ) \approx I(t,f)+ \rho(t) J(t,f).
\end{equation}
\end{theorem}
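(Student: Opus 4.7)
I split the proof into the upper bound and the lower bound, and within the latter handle $p=\infty$ separately.

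For the upper bound, as pointed out in Remark \ref{R1}, the argument establishing ``$\lesssim$'' in Theorem \ref{HomET1} (which rests on estimate (2.30) of \cite{AEEK}, recorded here as (\ref{kfea1T3})) does not use $q_0\neq q_1$. Specialising it to $q_0=q_1=p$ yields
$$K(\rho(t),f;\bar{A}_{0,p;b_0},\bar{A}_{0,p;b_1})\lesssim I(t,f)+\rho(t)J(t,f)+\rho(t)b_1(t)K(t,f)+\|u^{-1/p}b_0(u)\|_{p,(t,\infty)}K(t,f),$$
and the last two terms are absorbed into $\rho(t)J(t,f)$ via the monotonicity of $K(\cdot,f)$ together with the SV-properties of $b_1$, exactly as in (\ref{kfea2})--(\ref{kfea4}).

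For the lower bound, the same subadditivity reduction as in the proof of Theorem \ref{HomET1}, together with the trivial inequalities $I(t,f_0)\leq\|f_0\|_{\bar{A}_{0,p;b_0}}$ and $J(t,f_1)\leq\|f_1\|_{\bar{A}_{0,p;b_1}}$, shows that it suffices to establish, for every decomposition $f=f_0+f_1$ with $f_j\in A_j$,
$$I(t,f_1)\lesssim\rho(t)\|f_1\|_{\bar{A}_{0,p;b_1}},\qquad \rho(t)J(t,f_0)\lesssim\|f_0\|_{\bar{A}_{0,p;b_0}}.$$
When $p<\infty$, both are weighted Hardy-type inequalities for the quasi-concave function $K(\cdot,f_j)$: Corollary \ref{C2QCF}(a) with $q=p$, $w=b_0$, $v=b_1$, $\phi=\rho$ handles the first, and Corollary \ref{C3QCF}(a) with $q=p$, $w=b_1$, $v=b_0$, $\psi=1/\rho$ handles the second. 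In both cases the $1/p$-powers cancel and the integral criteria (\ref{C2QCFe2}), (\ref{C3QCFe2}) reduce to $\sup_{0<x<t}\rho(x)\lesssim\rho(t)$ and $\sup_{x>t}1/\rho(x)\lesssim 1/\rho(t)$ respectively, both of which hold trivially since $\rho$ is increasing. The $K$-surjectivity hypothesis is what legitimises the use of this machinery, stated in the cited corollaries for the full class of quasi-concave test functions.

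When $p=\infty$ the corollaries no longer apply (they require $p,q<\infty$) and one must argue directly. From $b_1(u)K(u,f_1)\leq\|f_1\|_{\bar{A}_{0,\infty;b_1}}$ pointwise we get $I(t,f_1)\leq\|f_1\|_{\bar{A}_{0,\infty;b_1}}\sup_{u<t}b_0(u)/b_1(u)$, and the SV-properties of $b_0,b_1$ together with the monotonicity of $\rho$ bound this supremum by $\rho(t)$; the estimate on $J(t,f_0)$ is symmetric. The principal obstacle throughout is the weakness of the hypothesis ``$\rho$ is increasing'': the Theorem \ref{HomET1} strategy of inserting a factor $(\int_x^\infty b_0^{q_0}\,ds/s)^{-\epsilon}$ and integrating by parts degenerates at $q_0=q_1$, because the borderline integral $\int_0^t b_0^p(x)(\int_x^\infty b_0^p\,ds/s)^{-1}\,dx/x$ is logarithmically divergent. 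The whole point of routing through Corollaries \ref{C2QCF}/\ref{C3QCF} is that they are sharp at the borderline exponent $\epsilon=0$ and deliver exactly the bare monotonicity of $\rho$ as the sufficient condition.
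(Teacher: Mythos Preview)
Your proof is correct and follows essentially the same route as the paper's. Both invoke Remark~\ref{R1} for the upper bound and, for $p<\infty$, reduce the lower bound to the conditions $\sup_{0<x<t}\rho(x)\lesssim\rho(t)$ and $\sup_{x>t}1/\rho(x)\lesssim1/\rho(t)$ via Corollaries~\ref{C2QCF}(a) and~\ref{C3QCF}(a). For $p=\infty$ the paper takes a slightly shorter path: it first normalises by replacing $b_j$ with the non-increasing envelope $\sup_{u>t}b_j(u)$ (which leaves the spaces $\bar A_{0,\infty;b_j}$ unchanged because $K(\cdot,f)$ is non-decreasing), so that $\rho=b_0/b_1$ exactly, and then cites estimate~(2.35) of \cite{AEEK}; your direct computation is essentially an unpacking of that citation, though to make the step ``$\sup_{u<t}b_0(u)/b_1(u)\lesssim\rho(t)$'' fully rigorous you should make the same non-increasing normalisation explicit. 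One minor remark: the $K$-surjectivity hypothesis is not actually needed for the \emph{sufficiency} direction of the corollaries you invoke (only the necessity direction requires passing from $K$-functionals to arbitrary quasi-concave test functions), so your explanation of its role is slightly off---and indeed the paper's own proof does not appear to use it either.
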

\begin{proof}   In view of Remark \ref{R1}, it remains to derive the converse estimate $``\gtrsim"$ in (\ref{kfe2}).  First we consider the case $p=\infty.$ In this case we can assume, without loss of generality, that $b_0$ and $b_1$ are non-increasing functions. Therefore, we have $\rho=b_0/b_1.$ The desired estimate  $``\gtrsim"$ follows from the estimate $(2.35)$ in \cite{AEEK}. Next we turn to the case $0<p<\infty.$ As  in the  previous theorem,  we need to show that the estimates (\ref{kfeb1}) and (\ref{kfeb2})  hold ( with $q_0=q_1=p$) for  an arbitrary decomposition $f=f_0+f_1$ with  $f_j\in A_j$ $ (j=0,1)$. According to Corollary \ref{C2QCF} (a), the  estimate (\ref{kfeb1}) holds if the following condition is met:
\begin{equation}\label{kfeb5+}
\sup\limits_{0<x<t}\rho(x)\lesssim \rho(t),
\end{equation}
while, according to Corollary \ref{C3QCF} (a), the  estimate (\ref{kfeb2}) holds if the following condition is met:
\begin{equation}\label{kfeb7+}
\sup\limits_{x>t}1/\rho(x)\lesssim 1/\rho(t).
\end{equation}
But both (\ref{kfeb5+}) and (\ref{kfeb7+}) hold trivially in view of the fact that $\rho$ is increasing. The proof of the theorem is complete.
\end{proof}\\

\subsection{The case $\theta_0=\theta_1=1$}
The case  $\theta_0=\theta_1=1$ is  symmetric counterpart of the case $\theta_0=\theta_1=1$, and the corresponding estimates  can be derived immediately from the estimates in the previous subsection using the same symmetry argument as in the proof of Theorem 4.3 in \cite{GOT}.\\

In order to formulate the results, we introduce some further notation. Let $0<q_j\leq\infty$ and $b_j\in SV_{1,q_j}$ $(j=0,1).$ Put
$$
\eta(t)=\frac{\|u^{-1/q_0}b_0(u)\|_{q_0,(0,t)}}{\|u^{-1/q_1}b_1(u)\|_{q_1,(0,t)}},
$$
$$I_1(t,f)=\|u^{-1-1/q_0}b_0(u)K(u,f;A_0,A_1)\|_{q_0, (0,t)},$$
and
$$ J_1(t,f)=\|u^{-1-1/q_1}b_1(u)K(u,f;A_0,A_1)\|_{q_1, (t,\infty)}.$$
Moreover, let  $\epsilon>0$ and set
$$
\eta_{\epsilon}(t)=\frac{\|u^{-1/q_0}b_0(u)\|_{q_0,(0,t)}^{1+\epsilon}}{\|u^{-1/q_1}b_1(u)\|_{q_1,(0,t)}}.
$$

\begin{theorem}\label{HomET3} Let $0<q_0,q_1 \leq\infty$, $q_0\neq q_1,$  and  $b_j \in SV_{1,q_j}$  (j=0,1). Assume that the following condition is met:
\begin{equation}\label{HE3e1}
 \eta_\epsilon\; \text{is equivalent to a non-decreasing function for some}\; \epsilon>0.
 \end{equation}
Then, for all $f\in A_0+A_1$ and all $t>0,$ we have
\begin{equation}\label{kf3e1}
K(\eta(t),f; \bar{A}_{1,q_0; b_0}, \bar{A}_{1,q_1;b_1} ) \approx I_1(t,f)+ \rho(t) J_1(t,f).
\end{equation}
Moreover, the condition (\ref{HE3e1}) is also necessary  provided that the given   couple $(A_0,A_1)$ is $K$-surjective.
\end{theorem}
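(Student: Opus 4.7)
The plan is to deduce Theorem~\ref{HomET3} from Theorem~\ref{HomET1} by invoking the standard symmetry of real interpolation, exactly along the lines of the proof of Theorem~4.3 in \cite{GOT}. I would first introduce the reversed couple $(B_0, B_1) := (A_1, A_0)$ together with the reflected functions $\tilde b_j(t) := b_j(1/t)$ for $j = 0, 1$. Using the basic identity $K(t, f; A_1, A_0) = t K(1/t, f; A_0, A_1)$ and the substitution $s = 1/u$ in the defining integrals, one checks that $\tilde b_j \in SV_{0, q_j}$ iff $b_j \in SV_{1, q_j}$, and that
\[
\bar A_{1, q_j; b_j} \;=\; \bar B_{0, q_j; \tilde b_j} \qquad (j = 0, 1),
\]
with identical quasi-norms. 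Hence the $K$-functional on the left-hand side of \eqref{kf3e1} is literally a $K$-functional for the pair to which Theorem~\ref{HomET1} applies, once one takes the data $(q_j, \tilde b_j)$ on the couple $(B_0, B_1)$.

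The next step is to translate all the ingredients of Theorem~\ref{HomET1}, under the substitution $u \mapsto 1/u$, back to the original data. On the level of weighted $L^{q_j}$-integrals,
\[
\|u^{-1/q_j} \tilde b_j(u)\|_{q_j,(t,\infty)} \;=\; \|s^{-1/q_j} b_j(s)\|_{q_j,(0,1/t)},
\]
so the symmetric versions of $\rho$ and $\rho_\epsilon$ become $\eta(1/\cdot)$ and $\eta_\epsilon(1/\cdot)$. Inserting $K(u, f; A_1, A_0) = u K(1/u, f; A_0, A_1)$ into the symmetric versions of $I$ and $J$ and applying the same change of variable converts them into the initial and tail integrals of $K(\cdot,f; A_0, A_1)$ against $s^{-1-1/q_j} b_j(s)$, evaluated at the reflected point $1/t$. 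After matching the indices, these are precisely $I_1$ and $J_1$. Similarly, condition \eqref{HE3e1} corresponds, under the reflection $t \mapsto 1/t$, to condition \eqref{HE1e1} applied to the reflected data.

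With these identifications in place, sufficiency follows by applying the sufficient direction of Theorem~\ref{HomET1} to the couple $(B_0, B_1)$ with parameters $(q_j, \tilde b_j)$ and then substituting $t \mapsto 1/t$ in the resulting estimate. For necessity, I would first note that $K$-surjectivity is invariant under passing from $(A_0, A_1)$ to $(A_1, A_0)$: if $\phi$ is quasi-concave then so is $s \mapsto s\phi(1/s)$, and $K(s, f; A_1, A_0) = s K(1/s, f; A_0, A_1)$, so the class of attainable $K$-functionals is preserved by the involution. Hence the necessity direction of Theorem~\ref{HomET1} transfers without modification to the symmetric setting.

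The only delicate point is the bookkeeping of substitutions. One must carry out the change of variable $u \mapsto 1/u$ carefully in each of the weighted integrals (with the straightforward $\esssup$ modification when some $q_j = \infty$), keep track of how the intervals $(0,t)$ and $(t, \infty)$ are exchanged, and verify that the form of \eqref{HE3e1} stated here is the literal image of \eqref{HE1e1} under this reflection. Once this bookkeeping is done, both halves of Theorem~\ref{HomET3} are immediate consequences of Theorem~\ref{HomET1}, with no new analytic input required.
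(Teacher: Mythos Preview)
Your proposal is correct and follows exactly the route indicated by the paper: the authors do not give a separate proof of Theorem~\ref{HomET3} but instead state that the case $\theta_0=\theta_1=1$ is the symmetric counterpart of $\theta_0=\theta_1=0$ and can be deduced from Theorem~\ref{HomET1} via the symmetry argument of \cite[Theorem~4.3]{GOT}. Your write-up in fact supplies more detail than the paper itself (the explicit identification $\bar A_{1,q_j;b_j}=\bar B_{0,q_j;\tilde b_j}$, the transfer of $K$-surjectivity to the reversed couple, and the bookkeeping of the substitution $u\mapsto 1/u$), all of which is accurate.
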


\begin{theorem}\label{HomET4} Let $0<p\leq\infty$ and $b_j\in SV_{1,p}$. Assume that  $\eta$ is increasing. In the case $p< \infty$, assume additionally that  the given   couple $(A_0,A_1)$ is $K$-surjective.   Then, for all $f\in A_0+A_1$ and all $t>0,$ we have
\begin{equation}\label{kf3e2}
K(\eta(t),f; \bar{A}_{1,p; b_0}, \bar{A}_{1,p;b_1} ) \approx I_1(t,f)+ \eta(t) J_1(t,f).
\end{equation}
\end{theorem}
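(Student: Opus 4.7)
The plan is to reduce Theorem \ref{HomET4} to Theorem \ref{HomET2} via the reflection $u\mapsto 1/u$ on $(0,\infty)$, exploiting the elementary $K$-functional identity $K(u,f;A_0,A_1)=u\,K(1/u,f;A_1,A_0)$. First, given $b\in SV_{1,p}$, set $\tilde b(u):=b(1/u)$; by definition of $SV_{1,p}$ we have $\tilde b\in SV_{0,p}$. A substitution $v=1/u$ in the defining quasi-norm yields the identification
$$
\bar A_{1,p;b}(A_0,A_1)=\bar A_{0,p;\tilde b}(A_1,A_0)
$$
with equivalent quasi-norms. Likewise, $K$-surjectivity of $(A_0,A_1)$ is equivalent to $K$-surjectivity of $(A_1,A_0)$, because $\phi$ is quasi-concave iff $t\mapsto t\phi(1/t)$ is, and $K(t,f;A_1,A_0)=tK(1/t,f;A_0,A_1)$.

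Next, I would apply Theorem \ref{HomET2} to the \emph{reversed} couple $(A_1,A_0)$ with the \emph{swapped} weights $B_0(u):=b_1(1/u)$ and $B_1(u):=b_0(1/u)$. Writing $\tilde\rho,\tilde I,\tilde J$ for the resulting quantities, a substitution $v=1/u$ gives
$$
\tilde\rho(s)=\frac{\|u^{-1/p}B_0(u)\|_{p,(s,\infty)}}{\|u^{-1/p}B_1(u)\|_{p,(s,\infty)}}=\frac{\|v^{-1/p}b_1(v)\|_{p,(0,1/s)}}{\|v^{-1/p}b_0(v)\|_{p,(0,1/s)}}=\frac{1}{\eta(1/s)},
$$
so the hypothesis ``$\eta$ increasing'' becomes exactly ``$\tilde\rho$ increasing'', fitting Theorem \ref{HomET2}. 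Swapping $b_0\leftrightarrow b_1$ is what makes the monotonicity come out right; without it one would obtain a decreasing $\tilde\rho$. Evaluating the resulting Holmstedt formula at $s=1/t$, using the first paragraph to identify $\bar A_{0,p;B_j}(A_1,A_0)=\bar A_{1,p;b_{1-j}}(A_0,A_1)$, and applying $K(\tau,f;X,Y)=\tau K(1/\tau,f;Y,X)$ with $\tau=\tilde\rho(1/t)=1/\eta(t)$, the left-hand side becomes $\tfrac{1}{\eta(t)}K(\eta(t),f;\bar A_{1,p;b_0},\bar A_{1,p;b_1})$.

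Finally, a routine change of variable $v=1/u$ in the definitions of $\tilde I$ and $\tilde J$, combined once more with $K(u,f;A_1,A_0)=u\,K(1/u,f;A_0,A_1)$, produces the identifications $\tilde I(1/t,f)=J_1(t,f)$ and $\tilde J(1/t,f)=I_1(t,f)$. Multiplying through by $\eta(t)$ converts the transported Holmstedt estimate into precisely (\ref{kf3e2}). The step I expect to require the most care is not any inequality but the bookkeeping described above: one must simultaneously reverse the couple, reflect the weights, and interchange $b_0\leftrightarrow b_1$ so that both the monotonicity hypothesis and the assignment of $I_1$ versus $J_1$ to $b_0$ versus $b_1$ come out consistently with the statement of Theorem \ref{HomET4}.
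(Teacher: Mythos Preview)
Your proposal is correct and follows essentially the same approach as the paper: the paper does not give a separate proof of Theorem~\ref{HomET4} but simply states that the case $\theta_0=\theta_1=1$ follows from the case $\theta_0=\theta_1=0$ (Theorem~\ref{HomET2}) by the standard symmetry argument via $u\mapsto 1/u$ and $K(u,f;A_0,A_1)=u\,K(1/u,f;A_1,A_0)$, which is exactly what you carry out in detail. Your careful bookkeeping with the swap $b_0\leftrightarrow b_1$ together with the couple reversal is the correct way to make the monotonicity hypothesis and the identification of $I_1,J_1$ line up.
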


\subsection{ The case $\theta_0=\theta_1\in (0,1)$}
First we treat the case $q_0\neq q_1$.
\begin{theorem}\label{HomET5} Let $0<\theta<1$, $0<q_0\neq q_1 \leq  \infty$, and let $b_0, b_1\in SV$.   Assume that the given couple $(A_0, A_1)$ is $K$-surjective.  Then there exists no positive function $w$ on $(0,\infty)$ such that the following estimate holds
\begin{eqnarray*} K(w(t),f; \bar{A}_{\theta,q_0; b_0}, \bar{A}_{\theta,q_1;b_1} ) &\gtrsim& \left(\int_0^ts^{-\theta q_0}K^{q_0}(s,f)\frac{ds}{s}\right)^{1/q_0}\\&&+ w(t) \left(\int_t^\infty s^{-\theta q_1}K^{q_1}(s,f)\frac{ds}{s}\right)^{1/q_1}
\end{eqnarray*}
for all $f\in A_0+A_1$ and for all $t>0.$
\end{theorem}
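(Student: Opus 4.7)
The plan is to argue by contradiction. Suppose such a positive function $w$ exists; I will construct a one-parameter family of test functions $f_\alpha$ whose $K$-functionals are quasi-concave ``rooftop'' profiles with a tunable singularity, and show that the claimed lower estimate then forces $w(1)$ to be arbitrarily large. Concretely, for each $\alpha\in(\theta,\min(1,2\theta))$ (a nonempty interval since $\theta\in(0,1)$), define
$$\phi_\alpha(s):=\begin{cases}s^{\alpha}, & 0<s\leq 1,\\ s^{2\theta-\alpha}, & s\geq 1,\end{cases}$$
which is immediately quasi-concave (one checks that $\phi_\alpha$ and $t\phi_\alpha(1/t)$ are both non-decreasing, using $0<\alpha\leq 1$ and $0<2\theta-\alpha\leq 1$). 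By $K$-surjectivity of $(A_0,A_1)$, pick $f_\alpha\in A_0+A_1$ with $K(s,f_\alpha)\approx\phi_\alpha(s)$ for all $s>0$.

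Now I specialise the putative inequality to $t=1$. A direct evaluation of the two power integrals yields, when $q_0,q_1<\infty$,
$$\mathrm{RHS}\;\approx\;\frac{1}{((\alpha-\theta)q_0)^{1/q_0}}+\frac{w(1)}{((\alpha-\theta)q_1)^{1/q_1}};$$
if $q_j=\infty$, the corresponding slot contributes only a quantity bounded in $\alpha$. For the left-hand side, the trivial decompositions $f_\alpha=f_\alpha+0=0+f_\alpha$ give
$$K(w(1),f_\alpha;\bar{A}_{\theta,q_0;b_0},\bar{A}_{\theta,q_1;b_1})\;\leq\;\min\bigl(\|f_\alpha\|_{\bar{A}_{\theta,q_0;b_0}},\,w(1)\|f_\alpha\|_{\bar{A}_{\theta,q_1;b_1}}\bigr).$$
Splitting each defining integral at $s=1$ and invoking Proposition \ref{svp1}(iii)--(iv) (Karamata-type asymptotics for slowly varying weights against powers) then gives the sharp estimate
$$\|f_\alpha\|_{\bar{A}_{\theta,q_j;b_j}}\;\approx\;(\alpha-\theta)^{-1/q_j}\qquad(\alpha\to\theta^+),$$
with the convention that $(\alpha-\theta)^{-1/\infty}$ denotes a bounded quantity.

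Finally assume without loss of generality $q_0<q_1$; the opposite case is symmetric. Since $1/q_0>1/q_1$, the minimum in the above upper bound is of order $w(1)(\alpha-\theta)^{-1/q_1}$ once $\alpha$ is close enough to $\theta$, whereas the first term of the right-hand side is of order $(\alpha-\theta)^{-1/q_0}$, which grows strictly faster. The putative inequality therefore forces $w(1)\gtrsim(\alpha-\theta)^{1/q_1-1/q_0}\to\infty$ as $\alpha\to\theta^+$, contradicting the fact that $w(1)$ is a fixed positive number. The main obstacle in making this argument fully rigorous is ensuring uniformity in $\alpha$ of both the $K$-surjectivity equivalence $K(\cdot,f_\alpha)\approx\phi_\alpha$ and of the Karamata asymptotics for the norms $\|f_\alpha\|_{\bar{A}_{\theta,q_j;b_j}}$: the latter follows from Proposition \ref{svp1} and the slowly varying behaviour of the $b_j$, while the former is reasonable because $\{\phi_\alpha\}$ is a continuously parametrised perturbation of the limiting profile $s\mapsto s^\theta$ on which the realization constants can be kept uniform.
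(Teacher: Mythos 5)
Your plan is sound in spirit — by $K$-surjectivity, realize a one-parameter family of quasi-concave profiles and let the parameter degenerate to force $w(1)$ to blow up — but the key quantitative step is incorrect for general slowly varying weights, and this breaks the argument.

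The gap is the claimed asymptotic
$$\|f_\alpha\|_{\bar{A}_{\theta,q_j;b_j}}\approx(\alpha-\theta)^{-1/q_j}\qquad(\alpha\to\theta^+).$$
Splitting at $s=1$ and substituting $K(s,f_\alpha)\approx\phi_\alpha(s)$ gives
$$\|f_\alpha\|_{\bar{A}_{\theta,q_j;b_j}}^{q_j}\approx\int_0^1 s^{(\alpha-\theta)q_j}b_j^{q_j}(s)\,\frac{ds}{s}+\int_1^\infty s^{-(\alpha-\theta)q_j}b_j^{q_j}(s)\,\frac{ds}{s},$$
and your claim is that each piece is $\approx(\alpha-\theta)^{-1}$. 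That is true when $b_j\equiv 1$, but for general $b_j\in SV$ the behaviour as $\alpha\to\theta^+$ depends entirely on the integrability of $b_j^{q_j}$ with respect to $ds/s$ near $0$ and $\infty$. For instance, take $b_j(s)=(1+|\log s|)^{-2/q_j}$; then
$$\int_0^1 s^{\beta}b_j^{q_j}(s)\,\frac{ds}{s}\;\longrightarrow\;\int_0^1(1-\log s)^{-2}\,\frac{ds}{s}=1<\infty\qquad(\beta\to 0^+),$$
so the norm of $f_\alpha$ remains \emph{bounded} as $\alpha\to\theta^+$ and no blow-up occurs. You cannot extract an $\alpha$-uniform constant from Proposition~\ref{svp1}(iii)--(iv): that proposition gives a two-sided bound for a \emph{fixed} exponent, and the implicit constant degenerates precisely in the limit you need ($\alpha-\theta\to 0$). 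So the comparison that is supposed to force $w(1)\gtrsim(\alpha-\theta)^{1/q_1-1/q_0}\to\infty$ simply does not follow: the left-hand side of the putative inequality, bounded via the trivial decompositions, need not scale like $(\alpha-\theta)^{-1/q_j}$, and the contradiction may fail for large families of admissible $b_0,b_1$.

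The paper takes a route that avoids any limiting-parameter asymptotics. From the trivial decompositions $f=f+0$ and $f=0+f$ it obtains two weighted-norm inequalities valid over \emph{all} quasi-concave $K(\cdot,f)$ (this is where $K$-surjectivity is used), then invokes the Gogatishvili--Pick characterisation (Corollaries~\ref{C2QCF} and \ref{C3QCF}) to translate these into closed-form two-sided bounds on $w$: an upper bound $w(t)\lesssim b_0(t)/b_1(t)$ and a lower bound involving $\bigl(\int_0^t[b_0/b_1]^r\,ds/s\bigr)^{1/r}$ with $r=q_0q_1/|q_1-q_0|$. Combining the two yields $\bigl(\int_0^t c^r\,ds/s\bigr)^{1/r}\lesssim c(t)$ with $c=b_0/b_1\in SV$, which fails for every slowly varying $c$ (a Gronwall-type argument shows it would force $c(t)\lesssim t^\varepsilon$ near $0$, impossible for $c\in SV$). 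That route is robust precisely because the integrability contradiction depends only on the slowly varying nature of $b_0/b_1$, not on any delicate constant tracking. If you want to repair your argument you would have to replace the fixed-$t$, degenerating-$\alpha$ scheme by one that exploits the slowly varying structure over a range of $t$ — which essentially reconstructs the weighted-Hardy characterisation used in the paper.
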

\begin{proof} We give the argument only in the case $ 0< q_0<q_1\leq\infty$ since the argument in  the other case $ 0< q_1<q_0\leq\infty$ is similar.  We assume, on the contrary, that there exists such a positive function $w $ on $(0,\infty)$. Taking a  particular decomposition $f=f+0$, $f\in \bar{A}_{\theta,q_0; b_0}$ and $0\in \bar{A}_{\theta,q_1; b_1}$, we obtain
\begin{equation}\label{he5e1}
w(t) \|s^{-\theta-1/q} b_1(s)K(s,f)\|_{q_1, (t,\infty)}\lesssim \|f\|_{\bar{A}_{\theta,q_0; b_0}},
\end{equation}
while taking a particular decomposition $f=0+f$, $0\in \bar{A}_{\theta,q_0; b_0}$ and $f\in \bar{A}_{\theta,q_1; b_1}$, we obtain
\begin{equation}\label{he5e2}
\left(\int_0^t s^{-\theta q_0}b_0^{q_0}(s)K^{q_0}(s,f)\frac{ds}{s}\right)^{1/q_0}\lesssim w(t) \|f\|_{\bar{A}_{\theta,q_1; b_1}}.
\end{equation}
First let $q_1<\infty$. Now, according to Corollary \ref{C3QCF} (a), it follows from (\ref{he5e1}) that
\begin{equation}\label{he5e3}
w(t)\lesssim \frac{b_0(t)}{b_1(t)},\;\;\;\; t>0,
\end{equation}
and while, according to Corollary \ref{C2QCF} (b), it   follows from (\ref{he5e2}) that
\begin{equation}\label{he5e4}
  \left(\int_0^t\left[\frac{b_0(s)}{b_1(s)}\right]^{\frac{q_0q_1}{q_0-q_1}}\frac{ds}{s}\right)^{1/q_1-1/q_0}\lesssim w(t),\;\;\;\; t>0.
\end{equation}
Finally, combining (\ref{he5e3}) and (\ref{he5e4}) yields
\begin{equation*}
  \left(\int_0^t\left[\frac{b_0(s)}{b_1(s)}\right]^{\frac{q_0q_1}{q_0-q_1}}\frac{ds}{s}\right)^{1/q_1-1/q_0}\lesssim  \frac{b_0(t)}{b_1(t)},\;\;\;\; t>0,
\end{equation*}
which is not possible since  $b_0$ and $b_1$ are slowly varying functions. Next we turn to the case $q_1=\infty.$ Choose $q_0<r<\infty$. Then, in view of the well-known embedding $\bar{A}_{\theta,r; b_1}\hookrightarrow \bar{A}_{\theta,\infty; b_1}$, (\ref{he5e2}) gives
\begin{equation*}
\left(\int_0^t s^{-\theta q_0}b_0^{q_0}(s)K^{q_0}(s,f)\frac{ds}{s}\right)^{1/q_0}\lesssim w(t) \|f\|_{\bar{A}_{\theta,r; b_1}},
\end{equation*}
from which, using Corollary \ref{C2QCF} (b), it follows that
\begin{equation}\label{he5e5}
  \left(\int_0^t\left[\frac{b_0(s)}{b_1(s)}\right]^{\frac{q_0r}{q_0-r}}\frac{ds}{s}\right)^{1/r-1/q_0}\lesssim w(t),\;\;\;\; t>0.
  \end{equation}
 Next we choose an $\epsilon>0 $  so that both $\theta+\epsilon$ and $\theta-\epsilon$ lie in the interval $(0,1)$. Next choose a $f\in A_0+A_1$ such that  we have
  $$
K(s,f)=\begin{cases}
			s^{\theta+\epsilon}, & 0<s<t,\\
            t^{2\epsilon}s^{\theta-\epsilon}, & s\geq t.
		 \end{cases}
$$
Then we again get (\ref{he5e3}) from (\ref{he5e1}). This time  combining (\ref{he5e3}) and (\ref{he5e5}) yields
\begin{equation*}
  \left(\int_0^t\left[\frac{b_0(s)}{b_1(s)}\right]^{\frac{q_0r}{q_0-r}}\frac{ds}{s}\right)^{1/r-1/q_0}\lesssim  \frac{b_0(t)}{b_1(t)},\;\;\;\; t>0,
\end{equation*}
which is again not  possible. The proof is complete.
\end{proof}\\

Next in the  case $q_0 = q_1$, a version of Holmstedt's formula does exit.
\begin{theorem}\label{HomET8} Let $0<\theta<1$, $0<q \leq  \infty.$ Let $b_0$ and $b_1$ be slowly varying functions such that $\rho=b_0/b_1$ is non-decreasing. Then for all $f\in A_0+ A_1$ and for all $t>0$, we have
\begin{eqnarray*}
K(\rho(t),f;{\bar{A}}_{\theta,q;b_0}, {\bar{A}}_{\theta,q;b_1})&\approx&\|u^{-\theta-1/{q}}b_0(u)K(u,f)\|_{p, (0,t)}\\
&&+\rho(t)\|u^{-\theta-1/{p}}b_1(u)K(u,f)\|_{p, (t,\infty)}.
\end{eqnarray*}

\end{theorem}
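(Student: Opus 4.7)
The plan is to establish the two inequalities $\lesssim$ and $\gtrsim$ separately, using only the monotonicity of $\rho=b_0/b_1$ (no $K$-surjectivity assumption is needed, in contrast to the $q_0\neq q_1$ subtheorems). I will carry out the argument for $0<q<\infty$; the case $q=\infty$ runs identically with essential suprema in place of integrals. Throughout, let $I(t,f)$ and $J(t,f)$ denote the two terms on the right, formed with exponent $\theta$ and weight $b_j$.

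\textbf{Lower bound.} Take any admissible decomposition $f=f_0+f_1$ with $f_j\in A_j$. Subadditivity of the $K$-functional gives $I(t,f)\lesssim I(t,f_0)+I(t,f_1)$ and $J(t,f)\lesssim J(t,f_0)+J(t,f_1)$. The estimates $I(t,f_0)\leq\|f_0\|_{\bar{A}_{\theta,q;b_0}}$ and $J(t,f_1)\leq\|f_1\|_{\bar{A}_{\theta,q;b_1}}$ are immediate by extending the respective integrals to $(0,\infty)$. The two remaining bounds
$$I(t,f_1)\lesssim\rho(t)\|f_1\|_{\bar{A}_{\theta,q;b_1}},\qquad \rho(t)J(t,f_0)\lesssim\|f_0\|_{\bar{A}_{\theta,q;b_0}}$$
follow from the pointwise weight comparisons $b_0(u)=\rho(u)b_1(u)\leq\rho(t)b_1(u)$ for $u\in(0,t)$ and $\rho(t)b_1(u)\leq\rho(u)b_1(u)=b_0(u)$ for $u>t$, both of which are instantaneous consequences of $\rho$ being non-decreasing. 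Combining these and taking the infimum over decompositions yields $I(t,f)+\rho(t)J(t,f)\lesssim K(\rho(t),f;\bar{A}_{\theta,q;b_0},\bar{A}_{\theta,q;b_1})$.

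\textbf{Upper bound.} For the given $t$, choose a near-optimal Peetre decomposition $f=g_0+g_1$ at $t$, i.e.\ $\|g_0\|_{A_0}+t\|g_1\|_{A_1}\leq 2K(t,f)$, and use it as the test decomposition in the definition of $K(\rho(t),f;\bar{A}_{\theta,q;b_0},\bar{A}_{\theta,q;b_1})$. The standard Holmstedt-type estimates
$$K(u,g_0)\lesssim K(u,f)\ (u\leq t),\quad K(u,g_0)\lesssim K(t,f)\ (u>t),$$
$$K(u,g_1)\lesssim (u/t)K(t,f)\ (u\leq t),\quad K(u,g_1)\lesssim K(u,f)\ (u>t),$$
(derived from $K(u,g_j)\leq\|g_j\|_{A_j}\cdot\min(1,u/t^{\,j})$ combined with $K(u,g_j)\leq K(u,f)+K(u,g_{1-j})$ and the fact that $K(u,f)/u$ is non-increasing) together with Proposition~\ref{svp1}(iii)--(iv) to compute the tails of the weights give
$$\|g_0\|_{\bar{A}_{\theta,q;b_0}}\lesssim I(t,f)+K(t,f)t^{-\theta}b_0(t),\qquad \|g_1\|_{\bar{A}_{\theta,q;b_1}}\lesssim K(t,f)t^{-\theta}b_1(t)+J(t,f).$$
Since $K(u,f)\geq K(t,f)$ for $u>t$, Proposition~\ref{svp1}(iv) gives $J(t,f)\gtrsim K(t,f)t^{-\theta}b_1(t)$, whence $K(t,f)t^{-\theta}b_0(t)=\rho(t)K(t,f)t^{-\theta}b_1(t)\lesssim\rho(t)J(t,f)$. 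Adding the two estimates one obtains $\|g_0\|_{\bar{A}_{\theta,q;b_0}}+\rho(t)\|g_1\|_{\bar{A}_{\theta,q;b_1}}\lesssim I(t,f)+\rho(t)J(t,f)$, proving the upper bound.

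The proof requires no essentially delicate step: the monotonicity of $\rho$ converts the lower bound into a pointwise weight inequality, and the upper bound is the classical Holmstedt decomposition argument in which the cross term $K(t,f)t^{-\theta}b_0(t)$ is absorbed into $\rho(t)J(t,f)$ exactly through the factor $\rho(t)=b_0(t)/b_1(t)$. The only point demanding any attention is the case $q=\infty$, where one uses $\operatorname*{ess\,sup}$-versions of the weight-tail estimates (e.g.\ $\sup_{u>t}u^{-\theta}b_1(u)\approx t^{-\theta}b_1(t)$, available from the slowly-varying property) and the same pointwise comparisons; the structure of the argument is unchanged.
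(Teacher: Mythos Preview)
Your proof is correct and follows the same underlying route as the paper: the paper's own proof is a one-line citation of the general estimates (2.30) and (2.35) from \cite[Theorem~2.3]{AEEK}, which are proved there via exactly the Peetre-decomposition/upper-bound and arbitrary-decomposition/lower-bound argument you have written out. In effect you have reproduced the contents of those cited estimates in the special case at hand; the absorption of the cross term $K(t,f)t^{-\theta}b_0(t)$ into $\rho(t)J(t,f)$ is precisely how (2.30) collapses to the desired bound when $0<\theta<1$.
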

\begin{proof} The proof follows immediately from the estimates (2.30) and (2.35)  in \cite[Theorem 2.3]{AEEK}.
\end{proof}
\section{Reiteration}
\begin{theorem}\label{ReT1} Let $0<q_0,q_1,q <\infty$, $0<\theta<1$,  $b_j \in SV_{0,q_j}$  (j=0,1),  and $b\in SV.$ Assume that $\rho$ is increasing on $(0,\infty)$ with $\lim\limits_{t \to 0^+}\rho(t)=0$ and $\lim\limits_{t \to \infty}\rho(t)=\infty.$  If  $q_0\neq q_1$, assume additionally    that the condition (\ref{HE1e1}) is met, while if $q_0=q_1$,   assume additionally that  the given couple $(A_0, A_1)$ is $K$-surjective and  that the following two-sided estimate holds:
 $$\frac{\rho^\prime(t)}{\rho(t)}\approx\frac{t^{-1}b_1^{q_1}(t)}{\int_{t}^{\infty}b_1^{q_1}(u)\frac{du}{u}}, \;\;t>0.$$
Put
\begin{equation*}
\tilde{b}(t)=[\rho(t)]^{(1-\theta)}b(\rho(t))[b_1(t)]^{q_1/q}\left(\int_{t}^{\infty}b_1^{q_1}(u)\frac{du}{u}\right)^{1/q_1-1/q}.
\end{equation*}
Then
\begin{equation}\label{RTI}
\left(\bar{A}_{0,q_0; b_0}, \bar{A}_{0,q_1;b_1}\right)_{\theta,q;b}= \bar{A}_{0,q;\tilde{b}}.
\end{equation}
\end{theorem}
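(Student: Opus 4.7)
The strategy is the classical reiteration pattern: apply the Holmstedt-type formula established earlier, perform the change of variables $s=\rho(t)$, and then reduce the result to an expression of the form $\|f\|_{\bar{A}_{0,q;\tilde{b}}}$ by means of the weighted Hardy-type inequalities collected in Section~\ref{due}. The hypotheses of the theorem are tailored precisely to make this program run.

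To begin, the stated hypotheses are exactly what is needed to apply Theorem~\ref{HomET1} when $q_0\neq q_1$ (via condition (\ref{HE1e1})), and Theorem~\ref{HomET2} when $q_0=q_1$ (via $K$-surjectivity of $(A_0,A_1)$ and monotonicity of $\rho$), giving for every $f\in A_0+A_1$ and every $t>0$
$$K\bigl(\rho(t),f;\bar{A}_{0,q_0;b_0},\bar{A}_{0,q_1;b_1}\bigr)\approx I(t,f)+\rho(t)J(t,f).$$
Since $\rho:(0,\infty)\to(0,\infty)$ is an increasing bijection by the assumed boundary behaviour, the substitution $s=\rho(t)$ together with the above equivalence transforms the $(\theta,q;b)$-norm into
$$\|f\|^q_{(\bar{A}_{0,q_0;b_0},\bar{A}_{0,q_1;b_1})_{\theta,q;b}}\approx T_1(f)+T_2(f),$$
where
$$T_1(f)=\int_0^\infty \rho(t)^{-\theta q}b(\rho(t))^q I(t,f)^q\,\frac{\rho'(t)}{\rho(t)}\,dt,\qquad T_2(f)=\int_0^\infty \rho(t)^{(1-\theta)q}b(\rho(t))^q J(t,f)^q\,\frac{\rho'(t)}{\rho(t)}\,dt.$$

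The task then reduces to showing $T_1(f)+T_2(f)\approx \|f\|^q_{\bar{A}_{0,q;\tilde{b}}}$. For $T_2$ one expands $J(t,f)^q=\bigl(\int_t^\infty u^{-1}b_1^{q_1}(u)K(u,f)^{q_1}\,du\bigr)^{q/q_1}$ and invokes the appropriate weighted inequality: Theorem~\ref{HET2} when $q>q_1$, Fubini when $q=q_1$, and Theorem~\ref{HET3} or \ref{HET3+} exploiting the monotonicity of $K(u,f)$ or $K(u,f)/u$ when $q<q_1$. The analogous dissection of $T_1$ uses Theorem~\ref{HET1} or its $\alpha<1$ counterpart. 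The inner $\rho$-integrals that then arise are evaluated by Proposition~\ref{svp1}(iii)--(iv); for example, $\int_0^t \rho(s)^{(1-\theta)q-1}b(\rho(s))^q\rho'(s)\,ds\approx \rho(t)^{(1-\theta)q}b(\rho(t))^q$, with a dual identity on $(t,\infty)$ feeding into $T_1$. Combining these with $\rho(t)\approx \|u^{-1/q_0}b_0\|_{q_0,(t,\infty)}/\|u^{-1/q_1}b_1\|_{q_1,(t,\infty)}$ and the definition of $\tilde{b}$, one sees that the weights emerging from both $T_1$ and $T_2$ match $t^{-1}\tilde{b}(t)^q$. In the case $q_0=q_1$, the assumed two-sided estimate $\rho'/\rho\approx t^{-1}b_1^{q_1}/\int_t^\infty b_1^{q_1}\,du/u$ is the bookkeeping identity that converts the change-of-variable factor $\rho'(t)/\rho(t)$ into precisely the $b_1$-dependent factor carried by $\tilde{b}$.

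The main obstacle is the case analysis driven by the relative sizes of $q$, $q_0$, and $q_1$, since each sub-case requires a different Hardy-type inequality from Section~\ref{due}. The most subtle point is the lower bound $T_1+T_2\gtrsim \|f\|^q_{\bar{A}_{0,q;\tilde{b}}}$ when $q<\min(q_0,q_1)$: here one must reverse the Hardy estimates by exploiting that $K(\cdot,f)$ is non-decreasing and $K(\cdot,f)/\cdot$ is non-increasing, and, in the $q_0=q_1$ regime, use $K$-surjectivity to certify sharpness on a sufficient class of quasi-concave test profiles. Once these technical points are disposed of, the remainder is routine manipulation of slowly varying factors.
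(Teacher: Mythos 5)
Your overall strategy coincides with the paper's: apply the Holmstedt formula from Theorem~\ref{HomET1} (resp. Theorem~\ref{HomET2}), substitute $s=\rho(t)$, and reduce to the $\tilde b$-norm by weighted Hardy-type estimates with a three-way case analysis on $q$ vs.\ $q_0,q_1$. However, two details in your sketch do not survive contact with the actual argument.

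First, for the piece coming from $I(t,f)=\|u^{-1/q_0}b_0(u)K(u,f)\|_{q_0,(0,t)}$, your phrase ``Theorem~\ref{HET1} or its $\alpha<1$ counterpart'' glosses over the fact that no $\alpha<1$ counterpart of Theorem~\ref{HET1} appears in Section~\ref{due}: Theorems~\ref{HET3+} and \ref{HET3} both handle the dual configuration $\int_t^\infty\phi h$, not $\int_0^t\phi h$, and they impose monotonicity of $h$. The paper handles the genuinely delicate case $q<q_0$ by invoking the Heinig--Maligranda characterization (Theorem~\ref{hmt1}), with $\alpha=q/q_0$, $h=K(\cdot,f)$, kernel $\psi(t,u)=u^{-1}b_0^{q_0}(u)\chi_{(0,t)}(u)$, and an explicit choice of $v$; this is not a Hardy inequality in the sense of Theorems~\ref{HET1}--\ref{HET3} and needs the additional observation $\frac{b_0^{q_0}(t)}{\int_t^\infty b_0^{q_0}du/u}\lesssim\frac{b_1^{q_1}(t)}{\int_t^\infty b_1^{q_1}du/u}$, which comes from the monotonicity of $\rho$.

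Second, your description of the lower bound is over-engineered and partly misattributed. In the paper, the inequality $\gtrsim$ is obtained directly and cheaply: $I_2\geq\|f\|_{\bar A_{0,q;\tilde b}}^q$ is an immediate consequence of $u\mapsto K(u,f)$ being non-decreasing, with no case analysis on $q$ vs.\ $q_1$, and $I_1$ is only estimated from above. In particular, $K$-surjectivity plays no role as a ``sharpness on quasi-concave profiles'' device in the reiteration proof; it is used solely so that Theorem~\ref{HomET2} applies when $q_0=q_1<\infty$. If you keep the strategy but repair these two points --- replace the phantom ``$\alpha<1$ counterpart'' by Theorem~\ref{hmt1}, and obtain the lower bound from monotonicity rather than reversed Hardy estimates --- the argument matches the paper's.
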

\begin{proof} We consider only the case $q_0\neq q_1$; the other case $q_0=q_1$ being  similar.  Let $f\in A_0+A_1$, and  set $X=\left(\bar{A}_{0,q_0; b_0}, \bar{A}_{0,q_1;b_1}\right)_{\theta,q;b}$, , $Y=\bar{A}_{0,q;\tilde{b}}$  and
$$\frac{1}{\sigma(t)}=\left\|\frac{x^{-1/q_1}b_1(x)}{\|s^{-1/q_0}b_0(s)\|_{q_0,(x,\infty)}}\right\|_{q_1, (t,\infty)}.$$
Then, in view of (\ref{kfeb5})  along with Remark \ref{R2}, we have $\sigma\approx \rho.$ Thus, by Theorem \ref{HomET1}, we get
\begin{equation*}
\|f\|_{X}^q\approx I_1+I_2,
\end{equation*}
where
\begin{equation*}
I_1=\int_{0}^{\infty}[\sigma(t)]^{-\theta q}b^q(\sigma(t))\left(\int_{0}^{t}b_0^{q_0}(u)K^{q_0}(u,f)\frac{du}{u}\right)^{q/q_0}\frac{\sigma^\prime(t)}{\sigma(t)}dt,
\end{equation*}
and\begin{equation*}
I_2=\int_{0}^{\infty}[\sigma(t)]^{(1-\theta)q}b^q(\sigma(t))\left(\int_{t}^{\infty}b_1^{q_1}(u)K^{q_1}(u,f)\frac{du}{u}\right)^{q/q_1}\frac{\sigma^\prime(t)}{\sigma(t)}dt.
\end{equation*}
We can compute that
$$\frac{\sigma^\prime(t)}{\sigma(t)}\approx\frac{t^{-1}b_1^{q_1}(t)}{\int_{t}^{\infty}b_1^{q_1}(u)\frac{du}{u}}.$$
First we show that  $I_2\approx\|f\|_{Y}^q$. Now $I_2\geq\|f\|_{Y}^q$ is a simple consequence of the fact that $u\mapsto K(u,f)$ is non-decreasing.  In order to establish the converse estimate $I_2\lesssim \|f\|_{Z}^q,$ we distinguish three cases: $q=q_1$, $q>q_1$ and $q<q_1.$ The case  $q=q_1$  simply follows from  Fubini's theorem. Next the case $q>q_1$ follows from Theorem \ref{HET2}, while the case $q<q_1$ follows from Theorem \ref{HET3}. Thus, it remains to show that $I_1\lesssim  \|f\|_{Y}^q$.  Again we distinguish three cases: $q=q_0$, $q>q_0$ and $q<q_0.$ The case  $q=q_0$  follows from  Fubini's theorem, while the case   $q>q_0$ follows from Theorem \ref{HET1} in view of the following estimate
$$\frac{b_0^{q_0}(t)}{\int_{t}^{\infty}b_0^{q_0}(u)\frac{du}{u}}\lesssim\frac{b_1^{q_1}(t)}{\int_{t}^{\infty}b_1^{q_1}(u)\frac{du}{u}}, \;\;t>0,$$
which is a simple consequence of our assumption that $\rho$ is increasing on $(0,\infty).$ As for the case  $q < q_0,$  we apply Theorem \ref{hmt1} with $\alpha=q/q_0,$ $h(t)=K(t,f)$, $w(t)=t^{-1}[\tilde{b}(t)]^q $, $\psi(t,u)=u^{-1}b_0^{q_0}\chi_{(0,t)}(u)$ and
$$v(t)=t^{-1}[\rho(t)]^{q(1-\theta)}b^q(\rho(t))[b_1(t)]^{q_1}\left(\int_{t}^{\infty}b_1^{q_1}(u)\frac{du}{u}\right)^{q/q_1-1}.$$  We observe that
 \begin{eqnarray*}
 \int_{0}^{\infty}\left(\int_{x}^{\infty}\psi(t, u)du\right)  ^{\alpha}w(t)dt &=& \int_{x}^{\infty}\left(\int_{x}^{t}b_0^{q_0}(u)\frac{du}{u}\right)^{q/{q_0}}[\tilde{b}(t)]^q\frac{dt}{t}\\
 &\leq&   \left(\int_{x}^{\infty}b_0^{q_0}(u)\frac{du}{u}\right)^{q/{q_0}} \int_{x}^{\infty}[\tilde{b}(t)]^q\frac{dt}{t}\\
 &\approx&\left(\int_{x}^{\infty}b_0^{q_0}(u)\frac{du}{u}\right)^{q/{q_0}}[\rho(t)]^{-\theta q}b^q(\rho(t)),
\end{eqnarray*}
and
 \begin{eqnarray*}
 \int_{x}^{\infty}v(t)dt &\gtrsim & [\rho(t)]^{q(1-\theta)}b^q(\rho(t)) \int_{x}^{\infty}[b_1(t)]^{q_1}\left(\int_{t}^{\infty}b_1^{q_1}(u)\frac{du}{u}\right)^{q/q_1-1}\frac{dt}{t}\\
 &\approx&   \left(\int_{x}^{\infty}b_0^{q_0}(u)\frac{du}{u}\right)^{q/{q_0}}[\rho(t)]^{-\theta q}b^q(\rho(t)).
 \end{eqnarray*}
Thus, $I_1\lesssim  \|f\|_{Y}^q$ holds.  The proof of the theorem is complete.
\end{proof}\\

\begin{remark}
{\em We have left out the cases $\theta=0$ and $\theta=1$ since in these cases no simplification takes place and the resulting   interpolation spaces involve the  $K$-interpolation spaces of type $\mathcal{L}$  and $\mathcal{R}$ (see, for instance, \cite{GOT}). We leave the details to the reader. }
\end{remark}

\begin{remark}
{\em We refer the reader to a recent reiteration formula \cite[Theorem 5.8]{AFF} which deals with the  case $q_0=q_1$ (without $K$-surjective assumption) for general weights (under certain appropriate conditions )  and for  ordered couples $(A_0, A_1)$ in the sense that  $A_1 \hookrightarrow A_0$.}
\end{remark}

The reiteration theorem corresponding to the limiting case $\theta_0=\theta_1=1$ reads as follows.
\begin{theorem}\label{ReT2} Let $0<q_0,q_1,q <\infty$, $0<\theta<1$,  $b_j \in SV_{1,q_j}$  (j=0,1),  and $b\in SV.$ Assume that $\eta$ is increasing on $(0,\infty)$ with $\lim\limits_{t \to 0^+}\eta(t)=0$ and $\lim\limits_{t \to \infty}\eta(t)=\infty.$  If  $q_0\neq q_1$, assume additionally    that the condition (\ref{HE3e1}) is met, while if $q_0=q_1$,   assume additionally that  the given couple $(A_0, A_1)$ is $K$-surjective and  that the following two-sided estimate holds:
 $$\frac{\eta^\prime(t)}{\eta(t)}\approx\frac{t^{-1}b_1^{q_1}(t)}{\int_{0}^{t}b_1^{q_1}(u)\frac{du}{u}}, \;\;t>0.$$  Put

\begin{equation*}
\hat{b}(t)=[\eta(t)]^{\theta}b(\eta(t))[b_1(t)]^{q_1/q}\left(\int_{0}^{t}b_1^{q_1}(u)\frac{du}{u}\right)^{1/q_1-1/q}.
\end{equation*}
Then

\begin{equation}\label{RTI}
\left(\bar{A}_{1,q_0; b_0}, \bar{A}_{1,q_1;b_1}\right)_{\theta,q;b}= \bar{A}_{1,q;\hat{b}}.
\end{equation}
\end{theorem}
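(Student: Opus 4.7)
The plan is to deduce Theorem~\ref{ReT2} from Theorem~\ref{ReT1} via the symmetry argument indicated at the beginning of Subsection~3.2. Set $(\tilde{A}_0,\tilde{A}_1):=(A_1,A_0)$. From $K(t,f;\tilde{A}_0,\tilde{A}_1)=tK(1/t,f;A_0,A_1)$, together with the change of variable $u\mapsto 1/u$ in the defining integrals, one obtains the two identities
\[
(A_0,A_1)_{1,q;c}=(\tilde{A}_0,\tilde{A}_1)_{0,q;c^{*}},\qquad (Y_0,Y_1)_{\theta,q;b}=(Y_1,Y_0)_{1-\theta,q;b^{*}},
\]
with $c^{*}(t):=c(1/t)$. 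Setting $b_j^{*}(t):=b_j(1/t)$ and combining these two, the space on the left-hand side of~(\ref{RTI}) becomes
\[
\bigl((\tilde{A}_0,\tilde{A}_1)_{0,q_1;b_1^{*}},(\tilde{A}_0,\tilde{A}_1)_{0,q_0;b_0^{*}}\bigr)_{1-\theta,q;b^{*}},
\]
which is an instance of Theorem~\ref{ReT1} applied to the couple $(\tilde{A}_0,\tilde{A}_1)$ with $(q_1,b_1^{*})$ in position~$0$, $(q_0,b_0^{*})$ in position~$1$, outer parameter $1-\theta$, and outer weight $b^{*}$.

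Next I verify that all the hypotheses translate correctly. The substitution $v=1/u$ yields
\[
\|u^{-1/q_j}b_j^{*}(u)\|_{q_j,(t,\infty)}=\|v^{-1/q_j}b_j(v)\|_{q_j,(0,1/t)},
\]
so the $\rho$-function $\rho^{\sharp}$ associated, as in Theorem~\ref{ReT1}, with the reduced data satisfies $\rho^{\sharp}(t)=1/\eta(1/t)$. Since by hypothesis $\eta$ is an increasing bijection of $(0,\infty)$ with $\eta(0^{+})=0$ and $\eta(\infty)=\infty$, the same holds for $\rho^{\sharp}$, and condition~(\ref{HE3e1}) on $\eta_{\epsilon}$ transforms into condition~(\ref{HE1e1}) on $\rho^{\sharp}_{\epsilon}$. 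In the case $q_0=q_1$, differentiating $\rho^{\sharp}(t)=1/\eta(1/t)$ and using the tail identity displayed above shows that the two-sided estimate for $\eta'/\eta$ in the hypothesis is equivalent to the corresponding estimate for $(\rho^{\sharp})'/\rho^{\sharp}$ demanded by Theorem~\ref{ReT1}; $K$-surjectivity is clearly preserved under reversal of the couple.

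Finally, Theorem~\ref{ReT1} applied in the reduced situation yields an identification of the above as $(\tilde{A}_0,\tilde{A}_1)_{0,q;\tilde{b}^{\sharp}}$, where $\tilde{b}^{\sharp}$ is produced by the weight formula of Theorem~\ref{ReT1}. Transforming back with the inverse identification $(\tilde{A}_0,\tilde{A}_1)_{0,q;c}=\bar{A}_{1,q;c^{*}}$ and substituting $\rho^{\sharp}(t)=1/\eta(1/t)$, $b_j^{*}(t)=b_j(1/t)$, a direct computation that again relies on the displayed tail-to-head identity converts $(\tilde{b}^{\sharp})^{*}$ into the weight $\hat{b}$ of the statement.

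The main obstacle is the bookkeeping in these nested substitutions: after the double swap one must correctly identify which index plays the role of ``$b_1$'' of Theorem~\ref{ReT1}, and each tail integral $\int_{t}^{\infty}$ on the reduced side must be converted to the head integral $\int_{0}^{t}$ appearing in Theorem~\ref{ReT2}. The two-sided estimate in the case $q_0=q_1$ is the subtlest point, since the role of $b_1^{q_1}$ in the analogous estimate for $(\rho^{\sharp})'/\rho^{\sharp}$ is played by $(b_0^{*})^{q_0}$, and the equivalence of the two versions has to be extracted from the hypothesis on $\eta'/\eta$ itself.
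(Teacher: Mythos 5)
Your overall strategy is the one the paper implicitly intends: rewrite each $\bar{A}_{1,q_j;b_j}$ over $(A_0,A_1)$ as $\bar{A}_{0,q_j;b_j^{*}}$ over the reversed couple, then reverse the outer interpolation indices (the double swap) so that the resulting $\rho^{\sharp}(t)=1/\eta(1/t)$ is increasing, and finally invoke Theorem~\ref{ReT1}. That much is right, and the limits $\rho^{\sharp}(0^{+})=0$, $\rho^{\sharp}(\infty)=\infty$ do transfer as you say.

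However, the two places where you write ``one verifies'' are precisely where the argument does not go through as asserted, and no verification is given. First, after the double swap the role of ``$b_0$'' in Theorem~\ref{ReT1} is played by $b_1^{*}$, so writing $N_j(s)=\|u^{-1/q_j}b_j(u)\|_{q_j,(0,s)}$, the transformed condition~(\ref{HE1e1}) reads: $N_1(s)^{1+\epsilon}/N_0(s)$ is equivalent to a non-increasing function for some $\epsilon>0$, i.e.\ $N_0/N_1^{1+\epsilon}=\eta/N_1^{\epsilon}$ is equivalent to a non-decreasing function. This is not the paper's~(\ref{HE3e1}), which asks that $\eta_{\epsilon}=N_0^{1+\epsilon}/N_1=\eta N_0^{\epsilon}$ be equivalent to a non-decreasing function; since $N_0$ is itself non-decreasing and $\eta$ is assumed increasing, that condition is trivially satisfied, whereas $\eta/N_1^{\epsilon}$ being non-decreasing is a genuine growth requirement on $\eta$. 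So the claim that ``(\ref{HE3e1}) transforms into (\ref{HE1e1})'' is false as stated, and the issue needs to be surfaced, not hidden in a one-line assertion. Second, transporting $\tilde{b}$ from Theorem~\ref{ReT1} through the substitutions gives
\begin{equation*}
(\tilde{b}^{\sharp})^{*}(t)=[\eta(t)]^{-\theta}\,b(\eta(t))\,[b_0(t)]^{q_0/q}\Bigl(\int_0^{t}b_0^{q_0}(u)\tfrac{du}{u}\Bigr)^{1/q_0-1/q},
\end{equation*}
which has the factor $\eta^{-\theta}$ and involves $b_0,q_0$, whereas the theorem's $\hat{b}$ has $\eta^{\theta}$ and $b_1,q_1$. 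The ratio of the two weights is $\eta^{1-2\theta}\bigl(b_0^{q_0}N_1^{q_1}/(N_0^{q_0}b_1^{q_1})\bigr)^{1/q}$, which is not identically $\approx 1$: even when $q_0=q_1$ and the two-sided estimate on $\eta'/\eta$ makes the second factor bounded, the factor $\eta^{1-2\theta}$ remains. So ``a direct computation converts $(\tilde{b}^{\sharp})^{*}$ into $\hat{b}$'' is not correct without further work; either the transported weight must be shown equivalent to $\hat{b}$ under the stated hypotheses (which does not appear to hold for $\theta\neq\tfrac12$), or the formula for $\hat{b}$ in the statement has to be corrected. In a proof by symmetry reduction the entire content lies in these translations, so leaving them unchecked is a real gap, not a bookkeeping omission.
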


\begin{remark}
    {\em The previous two reiteration theorems have already obtained in the special case when $b_j$ ($j=0,1$) are logarithmic functions (see \cite[Corollary 1]{D1}) or broken logarithmic functions (see \cite[Corollaries 7.8 and 7.11]{EOP}).}

\end{remark}

\begin{remark}
    {\em Let $0<\theta, \eta<1$ and $0<q\leq \infty$.  The  characterization of the interpolation spaces $$({\bar{A}}_{\theta,q;b_0}, {\bar{A}}_{\theta,q;b_1})_{\theta,r;b}$$ will involve the  $K$-interpolation spaces of type $\mathcal{L}$  and $\mathcal{R}$. We once again leave the elementary  details to the reader.  }

\end{remark}

\section{Concrete examples}
\subsection{Lorentz-Karamata spaces}
Let $(\Omega, \mu)$ be a $\sigma$-finite measure space. Let $f^\ast$ denotes the non-increasing rearrangement of a $\mu$-measurable function $f$ on $\Omega$ (see, for instance,  \cite{BS}).
\begin{definition} \cite{GOT} Let $0<p,q\leq \infty $ and $b\in SV.$ The Lorentz-Karamata space $L_{p,q;b}$ consists of all $\mu$-measurable functions $f$ on $\Omega$ such that the quasi-norm
$$\|f\|_{L_{p,q;b}}=\|t^{1/p-1/q}b(t)f^\ast(t)\|_{q,(0,\infty)}$$
is finite.
\end{definition}
For $b=\ell^\mathbb{A}$,  the spaces $L_{p,q;b}$ coincide with the spaces $L_{p,q;\mathbb{A}}$ from \cite{EO} and \cite{EOP}. When $b\equiv 1$, the spaces $L_{p,q;b}$ become the Lorentz spaces $L^{p,q}$, which coincide with the classical Lebesgue spaces $L^p$ for $p=q.$

We give an application of Theorem \ref{ReT2} to the interpolation of Lorentz-Karamata spaces $L_{p,q;b}$ in the  critical case when $p=\infty.$ To this end, we   characterize $L_{\infty,q;b}$ as limiting $K$-interpolation spaces for the couple $(L^1, L^\infty).$
\begin{lemma}\label{APL1} Let $0<q<\infty$ and $b\in SV_{1,q}.$ Then
$$L_{\infty,q;b}=(L^1, L^\infty)_{1,q;b}.$$

\end{lemma}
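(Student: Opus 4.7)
The plan is to convert the lemma into a weighted Hardy-type inequality via Peetre's classical formula
\begin{equation*}
K(t,f;L^1,L^\infty)=\int_0^t f^*(s)\,ds,\qquad t>0,
\end{equation*}
so that $\|f\|_{(L^1,L^\infty)_{1,q;b}}=\bigl\|t^{-1-1/q}b(t)\int_0^t f^*(s)\,ds\bigr\|_{q,(0,\infty)}$. Comparing with $\|f\|_{L_{\infty,q;b}}=\|t^{-1/q}b(t)f^*(t)\|_{q,(0,\infty)}$, the lemma reduces to the two-sided equivalence of these two quasi-norms. The inequality $\|f\|_{L_{\infty,q;b}}\lesssim\|f\|_{(L^1,L^\infty)_{1,q;b}}$ is immediate: since $f^*$ is non-increasing, $f^*(t)\le\tfrac{1}{t}\int_0^t f^*(s)\,ds$ pointwise, and the estimate transfers directly to the $L^q$ quasi-norm.

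For the reverse direction, the task is to prove the Hardy-type estimate
\begin{equation*}
\int_0^\infty\Bigl(\int_0^t f^*(s)\,ds\Bigr)^q t^{-q-1}b^q(t)\,dt \;\lesssim\; \int_0^\infty f^*(t)^q\, t^{-1}b^q(t)\,dt.
\end{equation*}
I would split the argument by the range of $q$. For $q>1$, Theorem~\ref{HET1} applies directly with $\alpha=q$, $\phi\equiv1$, $h=f^*$ and $w(t)=t^{-q-1}b^q(t)$; combining with the slowly varying identity $\int_t^\infty u^{-q-1}b^q(u)\,du\approx t^{-q}b^q(t)$ from Proposition~\ref{svp1}(iv) applied to $b^q\in SV$, a short algebraic computation shows that the associated weight $v(t)=w(t)^{1-q}\bigl(\int_t^\infty w\bigr)^q$ is equivalent to $t^{-1}b^q(t)$, yielding the required bound (with no monotonicity of $h$ needed). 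For $0<q\le 1$, I would instead exploit the monotonicity of $f^*$ directly: writing $F(t)=\int_0^t f^*(s)\,ds$, differentiation gives $F(t)^q=q\int_0^t F(u)^{q-1}f^*(u)\,du$, and the elementary bound $F(u)\ge u f^*(u)$ (valid because $f^*$ is non-increasing) together with $q-1\le 0$ yields $F(u)^{q-1}\le u^{q-1}f^*(u)^{q-1}$, whence $F(t)^q\le q\int_0^t u^{q-1}f^*(u)^q\,du$. Fubini's theorem and a second application of Proposition~\ref{svp1}(iv) then close the estimate.

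The only delicate point is that the Hardy estimate has to be handled by two different techniques depending on whether $q>1$ or $q\le 1$; in both ranges, however, a single use of Proposition~\ref{svp1}(iv) delivers precisely the weight $t^{-1}b^q(t)$ on the right-hand side. The hypothesis $b\in SV_{1,q}$ plays no role in the Hardy estimate itself, but it is needed to guarantee that both quasi-norms are finite on a non-trivial class of functions (for instance on $L^1\cap L^\infty$).
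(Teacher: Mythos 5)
Your proof is correct. You take the same approach as the paper for the easy inequality and for the range $q>1$ (both via Theorem~\ref{HET1} with $\phi\equiv 1$, $\alpha=q$, $w(t)=t^{-q-1}b^q(t)$, and Proposition~\ref{svp1}(iv) to identify $v(t)\approx t^{-1}b^q(t)$). Where you diverge is the range $0<q\le 1$. The paper invokes Theorem~\ref{HET3+} (for $q<1$) plus a separate Fubini argument at $q=1$; both of those are stated with $\int_t^\infty$ inside, so applying them to $F(t)=\int_0^t f^*$ actually requires a change of variables $t\mapsto 1/t$ first, which the paper leaves to the reader. Your argument instead uses the elementary identity
\begin{equation*}
F(t)^q=q\int_0^t F(u)^{q-1}f^*(u)\,du,
\end{equation*}
together with $F(u)\ge u f^*(u)$ and $q-1\le 0$ to get $F(t)^q\le q\int_0^t u^{q-1}f^*(u)^q\,du$, and then closes with Fubini and Proposition~\ref{svp1}(iv). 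This is self-contained, handles $q<1$ and $q=1$ in one stroke (at $q=1$ the pointwise bound is an identity), and avoids quoting the weighted-inequality lemma altogether; the trade-off is that it leans directly on the monotonicity of $f^*$ rather than on a black-box inequality, which is perfectly appropriate here since $f^*$ is always non-increasing. One minor point worth keeping in mind: the step $F(t)^q=q\int_0^t F^{q-1}F'$ uses absolute continuity of $F$ with $F(0)=0$, and when $q<1$ one should observe that $F(u)>0$ for all $u>0$ unless $f=0$, so the integrand $F^{q-1}f^*$ is well defined a.e.; your bound $F(u)^{q-1}\le u^{q-1}f^*(u)^{q-1}$ is then legitimate on the set where $f^*(u)>0$, and on its complement the integrand vanishes.
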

\begin{proof} Put $X=(L^1, L^\infty)_{0,q;b}$ and $Y=L_{\infty,q;b}$, and let $f\in L^1+L^\infty.$ Since (see \cite[Theorem $5.2.1$]{BL})
$$K(t,f;L^1,L^{\infty})=\int_{0}^{t}f^{\ast}(u)du,\;\; t>0,$$
it turns out that
$$
\|f\|_{X}=\left(\int_{0}^{\infty}t^{-q}b^{q}(t){\left(\int_{0}^{t}f^{\ast}(u)du\right)}^{q}\frac{dt}{t}\right)^{1/q}.
$$
Now the estimate $\|f\|_{X}\geq \|f\|_{Y}$ follows immediately in view of the fact that $f^\ast$ is non-increasing. On the other hand,  the converse estimate $\|f\|_{X}\lesssim  \|f\|_{Y}$ follows from Theorem \ref{HET1} (in the case  $q>1$), Theorem \ref{HET3+}  (in the case $q<1$) and Fubini's theorem (in the case $q=1$). The proof is complete.
\end{proof}\\

\begin{theorem}  Let $0<q_0,q_1,q <\infty$, $0<\theta<1$,  $b_j \in SV_{1,q_j}$  (j=0,1),  and $b\in SV.$  Assume that $\eta$ is increasing on $(0,\infty)$ with $\lim\limits_{t \to 0^+}\eta(t)=0$ and $\lim\limits_{t \to \infty}\eta(t)=\infty.$  If  $q_0\neq q_1$, assume additionally    that the condition (\ref{HE3e1}) is met, while if $q_0=q_1$,   assume additionally  that the following two-sided estimate holds:
 $$\frac{\eta^\prime(t)}{\eta(t)}\approx\frac{t^{-1}b_1^{q_1}(t)}{\int_{0}^{t}b_1^{q_1}(u)\frac{du}{u}}, \;\;t>0.$$  Then
\begin{equation*}
\left(L_{\infty,q_0;b_0}, L_{\infty,q_1;b_1}\right)_{\theta,q;b}=L_{\infty,q;\hat{b}},
\end{equation*}
where
\begin{equation*}
\hat{b}(t)=[\eta(t)]^{\theta}b(\eta(t))[b_1(t)]^{q_1/q}\left(\int_{0}^{t}b_1^{q_1}(u)\frac{du}{u}\right)^{1/q_1-1/q}.
\end{equation*}

\end{theorem}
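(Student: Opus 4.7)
The plan is to reduce this statement directly to the abstract reiteration Theorem \ref{ReT2} by realizing the Lorentz--Karamata spaces as limiting $K$-interpolation spaces over the Banach couple $(L^1, L^\infty)$. This is made possible by Lemma \ref{APL1}, which identifies $L_{\infty,q;b} = (L^1,L^\infty)_{1,q;b}$ for any $b \in SV_{1,q}$, so the same identification applies simultaneously to $b_0$, $b_1$, and the output weight $\hat b$ (once we verify $\hat b \in SV_{1,q}$, which follows from the stability of $SV$ under products, powers, compositions with the increasing function $\eta$, and the rule in Proposition \ref{svp1} (v) applied to $\int_0^t b_1^{q_1}(u)\frac{du}{u}$, read through the symmetry $t \mapsto 1/t$ that passes between $SV_{0,\cdot}$ and $SV_{1,\cdot}$).

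First I would invoke Lemma \ref{APL1} three times to rewrite the left-hand side of the claim as
\begin{equation*}
\bigl(L_{\infty,q_0;b_0},\, L_{\infty,q_1;b_1}\bigr)_{\theta,q;b}
= \bigl((L^1,L^\infty)_{1,q_0;b_0},\, (L^1,L^\infty)_{1,q_1;b_1}\bigr)_{\theta,q;b},
\end{equation*}
and target $L_{\infty,q;\hat b} = (L^1,L^\infty)_{1,q;\hat b}$ on the right. Next I would apply Theorem \ref{ReT2} with $(A_0,A_1) = (L^1,L^\infty)$: all weight hypotheses, the monotonicity of $\eta$, the limits at $0$ and $\infty$, the condition (\ref{HE3e1}) when $q_0 \neq q_1$, and the derivative identity when $q_0 = q_1$ are hypotheses of the present theorem and transport unchanged. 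The one extra input demanded by Theorem \ref{ReT2} in the case $q_0 = q_1$ is $K$-surjectivity of the underlying couple; this is a classical fact for $(L^1,L^\infty)$, since $K(t,f;L^1,L^\infty) = \int_0^t f^\ast(u)\,du$ and, given any quasi-concave $\phi$, one can take $f$ with $f^\ast = \phi'$ to obtain $K(t,f;L^1,L^\infty) \approx \phi(t)$ (the density of the simple functions and the classical construction of a non-increasing rearrangement with prescribed distribution function make this explicit).

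Once Theorem \ref{ReT2} is applied, its conclusion gives $\bigl((L^1,L^\infty)_{1,q_0;b_0}, (L^1,L^\infty)_{1,q_1;b_1}\bigr)_{\theta,q;b} = (L^1,L^\infty)_{1,q;\hat b}$ with exactly the weight $\hat b$ written in the present theorem. Applying Lemma \ref{APL1} one more time in the reverse direction identifies this with $L_{\infty,q;\hat b}$, completing the proof.

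The main potential obstacle, if any, is the bookkeeping needed to verify that $\hat b \in SV_{1,q}$ so that Lemma \ref{APL1} is legitimately applicable on the right-hand side; however, this is a routine consequence of Proposition \ref{svp1} together with the asymptotic behaviour of $\int_0^t b_1^{q_1}(u)\frac{du}{u}$ and the composition rules for slowly varying functions. The substantive content of the theorem is entirely carried by Theorem \ref{ReT2} and Lemma \ref{APL1}; there is no new analytical step beyond the $K$-surjectivity of $(L^1,L^\infty)$.
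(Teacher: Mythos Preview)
Your proposal is correct and follows essentially the same approach as the paper: identify the Lorentz--Karamata spaces with $(L^1,L^\infty)_{1,q_j;b_j}$ via Lemma~\ref{APL1}, apply the abstract reiteration Theorem~\ref{ReT2} to the couple $(L^1,L^\infty)$, and re-identify the output. You are in fact more careful than the paper, since you explicitly address the $K$-surjectivity of $(L^1,L^\infty)$ needed in the $q_0=q_1$ case (the paper's proof simply says ``apply Theorem~\ref{ReT2}'' and leaves this point implicit) and you flag the routine verification that $\hat b\in SV_{1,q}$.
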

\begin{proof} Take $A_0=L^1$ and $A_1=L^\infty$, and apply Theorem \ref{ReT2} to obtain
\begin{equation*}
\left((L^1, L^\infty)_{1,q_0;b_0}, (L^1, L^\infty)_{1,q_1;b_1}\right)_{\theta,q;b}=(L^1, L^\infty)_{1,q;\hat{b}}.
\end{equation*}
Now it remains to apply Lemma \ref{APL1}.
\end{proof}

\subsection{Generalized gamma spaces}

\begin{definition}(\cite{FFGKR})
Let $0<q,r\leq \infty$, $0<p<\infty$, $b\in SV_{0,q}$ and $w\in SV$  The generalized gamma space $\Gamma(r,q,p;b,w)=\Gamma(r,q,p;b,w)(\Omega)$ consists of all those real-valued Lebesgue measurable functions $f$ on $\Omega,$ for which the quasi-norm $$\|f\|_{\Gamma(r,q,p;b,w)}= \left\|t^{-1/q}b(t)\|\tau^{1/p-1/r}w(\tau)f^\ast(\tau)\|_{r, (0,t)}\right\|_{q, (0,\infty)}$$
is finite.
\end{definition}
\begin{theorem}\label{bst1}
Let $0<q_0\neq q_1<\infty$, $0<q, p, r <\infty$, $0<\theta<1$,  $b_j \in SV_{0,q_j}$  (j=0,1),  and $b,w\in SV.$  Assume  that the condition (\ref{HE1e1}) is met, and    assume that $\rho$ is increasing with  $\lim\limits_{t \to 0^+}\rho(t)=0$ and $\lim\limits_{t \to \infty}\rho(t)=\infty.$ Then
$$(\Gamma(r,q_0,p;b_0,w), \Gamma(r,q_1,p;b_1,w))_{\theta,q;b}= \Gamma(r,q,p;\tilde{b},w),$$
where
\begin{equation*}
\tilde{b}(t)=[\rho(t)]^{(1-\theta)}b(\rho(t))[b_1(t)]^{q_1/q}\left(\int_{t}^{\infty}b_1^{q_1}(u)\frac{du}{u}\right)^{1/q_1-1/q}.
\end{equation*}
\end{theorem}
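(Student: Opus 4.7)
The strategy parallels the proof of Theorem 5.5 for Lorentz-Karamata spaces: reduce to Theorem \ref{ReT1} by first identifying each $\Gamma$-space as a limiting $K$-interpolation space of a fixed underlying couple. Specifically, I would first establish the lemma
\begin{equation*}
\Gamma(r,q,p;b,w) = (L^{p,r;w}, L^\infty)_{0, q; b}
\end{equation*}
valid for every $b \in SV_{0,q}$, up to an equivalence of slowly varying weights. Granted this identification, the theorem is immediate: applying Theorem \ref{ReT1} to the fixed couple $(A_0, A_1) := (L^{p,r;w}, L^\infty)$ with the given $q_0, q_1, b_0, b_1, \theta, q, b$ yields the reiteration identity, and rewriting each side via the lemma gives the conclusion. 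All the hypotheses of Theorem \ref{ReT1} are explicitly assumed in the statement of Theorem \ref{bst1}; note in particular that $q_0 \neq q_1$ bypasses the need for $K$-surjectivity of the couple.

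The heart of the argument is the lemma, which in turn reduces to computing $K(t, f; L^{p,r;w}, L^\infty)$ up to equivalence. The standard truncation decomposition, in which $f$ is split at the level $\lambda(t) = f^\ast(\psi(t))$ for a suitable quasi-concave function $\psi$ built from $p, r, w$, should yield
\begin{equation*}
K(t, f; L^{p,r;w}, L^\infty) \approx \|\tau^{1/p-1/r}w(\tau) f^\ast(\tau)\|_{r, (0, \psi(t))}.
\end{equation*}
Feeding this into the definition of $(L^{p,r;w}, L^\infty)_{0, q; b}$ and applying the change of variable $s = \psi(t)$, together with Proposition \ref{svp1}(ii) to transfer slowly varying factors across $\psi$, produces the $\Gamma$-space quasi-norm with an equivalent slowly varying weight $\tilde b \approx b \circ \psi^{-1} \in SV_{0,q}$. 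Since the correspondence $b \mapsto \tilde b$ is invertible on $SV_{0,q}$, the two families of spaces coincide.

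The main obstacle is the reverse estimate in the $K$-functional computation: while the upper bound follows directly from the chosen decomposition, the lower bound requires infimizing over all admissible decompositions and exploiting the monotonicity of $f^\ast$, together with the weighted Hardy-type inequalities of Theorems \ref{HET1}, \ref{HET2}, and \ref{HET3+}, applied according to whether $r \leq q$ or $r > q$. This calculation is analogous in spirit to the proof of Lemma \ref{APL1}, where the couple was $(L^1, L^\infty)$; the present setting requires only additional bookkeeping with the slowly varying weight $w$ and the exponent $p$, but no essentially new phenomenon arises.
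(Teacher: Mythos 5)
Your approach matches the paper's proof exactly: the paper reduces the statement to Theorem \ref{ReT1} via the identification $\Gamma(r,q_j,p;b_j,w)=(L_{p,r;w},L^\infty)_{0,q_j;b_j}$ (stated without proof, as an analogue of Lemma \ref{APL1}), and this is precisely the reduction you propose. Your treatment is in fact somewhat more careful than the paper's terse two-line proof, since you flag explicitly that computing $K(t,f;L_{p,r;w},L^\infty)$ produces a change of variable $\psi$ (roughly $\psi^{-1}(t)\approx t^p$ up to a slowly varying factor) which reparametrizes the slowly varying weight in the identification; the paper states the identification with the same $b_j$ on both sides, silently absorbing this reparametrization, whereas you note it and observe that the correspondence $b\mapsto\tilde b$ is invertible on $SV_{0,q}$, so the families of spaces coincide. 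Since the reparametrization is uniform across $j=0,1$ and across the outer interpolation, it cancels in the reiteration and the stated weight $\tilde b$ in the conclusion comes out correctly either way.
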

\begin{proof} The proof follows from Theorem \ref{ReT1} and the following interpolation formula (for $j=0,1$)
$$\Gamma(r,q_j,p;b_j,w)=(L_{p,r;w}, L^\infty)_{0,q_j;b_j}.$$
\end{proof}

\subsection{ Homogeneous Besov spaces}
Let $E$ be a rearrangement invariant Banach function space on $\mathbb{R}^n$ as in \cite{BS}, and let $\omega_E(f,t)=\sup\limits_{|h|\leq t}\|\Delta_h f\|_E$ is the modulus of continuity of $f\in E$ (see, for example, \cite{BzK}).
\begin{definition} (\cite{BzK})
Let $0<q\leq \infty$ and $b \in SV_{0,q}$. The homogeneous Besov space $B^{0,b}_{E,q}$ consists of those functions $f\in E$ for which the semi-quasi-norm
$$\|f\|_{B^{0,b}_{E,q}}=\|t^{-1/q}b(t)(t)\omega_E(f,t)\|_{q,(0,\infty)}$$
is finite.
\end{definition}
It is well-known (see, for instance, \cite{BK}) that
$$
K(f,t;E,W^1E)\approx  \omega_E(f,t),\;\; t>0,
$$
where $W^1E$ is the Sobolev space built over $E$ with a norm $\|f\|_{W^1E}=\||D^1f|\|_E$. Here $|D^1f|=\sum\limits_{|\alpha|=1}|D^\alpha f|.$ Then it follows immediately that
\begin{equation}\label{bse1}
(E,W^1E)_{0,b,q}={B^{0,b}_{E,q}}.
\end{equation}
\begin{remark}
\em {We observe that Theorem \ref{ReT1} also holds when the compatible couple  quasi-normed spaces is replaced by a compatible couple of semi-quasi-normed spaces. }

\end{remark}
\begin{theorem}\label{bst1}
Let $0<q_0\neq q_1 <\infty$,  $0< q <\infty$, $0<\theta<1$,  $b_j \in SV_{0,q_j}$  (j=0,1),  and $b\in SV.$   Assume  that the condition (\ref{HE1e1}) is met, and    assume that $\rho$ is increasing with  $\lim\limits_{t \to 0^+}\rho(t)=0$ and $\lim\limits_{t \to \infty}\rho(t)=\infty.$ Then
$$(B^{0,b_0}_{E,q_0}, B^{0,b_1}_{E,q_1})_{\theta,q;b}=B^{0,\tilde{b}}_{E,q},$$
where
\begin{equation*}
\tilde{b}(t)=[\rho(t)]^{(1-\theta)}b(\rho(t))[b_1(t)]^{q_1/q}\left(\int_{t}^{\infty}b_1^{q_1}(u)\frac{du}{u}\right)^{1/q_1-1/q}.
\end{equation*}

\end{theorem}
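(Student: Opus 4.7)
The plan is to reduce the statement to the abstract reiteration Theorem \ref{ReT1} by representing the homogeneous Besov spaces as $K$-interpolation spaces of the couple $(E, W^1E)$.

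First, I would invoke the identification (\ref{bse1}), which gives
$$B^{0,b_j}_{E,q_j}=(E,W^1E)_{0,q_j;b_j},\qquad j=0,1.$$
Substituting these identifications into the left hand side, what needs to be established is
$$\bigl((E,W^1E)_{0,q_0;b_0},(E,W^1E)_{0,q_1;b_1}\bigr)_{\theta,q;b}=(E,W^1E)_{0,q;\tilde b},$$
after which a second application of (\ref{bse1}) produces $B^{0,\tilde b}_{E,q}$ on the right hand side and completes the proof.

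Next I would apply Theorem \ref{ReT1} with $A_0=E$ and $A_1=W^1E$. All the hypotheses transfer directly from the statement we are proving: the parameters $q_0,q_1,q,\theta$ satisfy the required ranges, $b_j\in SV_{0,q_j}$ and $b\in SV$ by assumption, the increasing function $\rho$ has the required limits at $0$ and $\infty$, the condition (\ref{HE1e1}) is assumed, and since $q_0\neq q_1$ no $K$-surjectivity hypothesis needs to be checked. The function $\tilde b$ appearing in the conclusion of Theorem \ref{ReT1} coincides with the $\tilde b$ in our statement. Hence the intermediate equality above follows.

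The only genuine issue, and the one I expect to be the main obstacle, is that the couple $(E,W^1E)$ is a couple of semi-quasi-normed spaces rather than quasi-normed spaces, whereas Theorem \ref{ReT1} is formally stated in the quasi-normed setting. This is precisely the point flagged in the preceding remark, which asserts that Theorem \ref{ReT1} remains valid in the semi-quasi-normed setting. Consequently, the reiteration step is legitimate and the argument concludes as indicated.
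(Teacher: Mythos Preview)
Your proposal is correct and follows exactly the same approach as the paper's own proof: invoke the identification (\ref{bse1}) to rewrite the Besov spaces as limiting $K$-interpolation spaces of $(E,W^1E)$, apply Theorem \ref{ReT1}, and appeal to the remark about the semi-quasi-normed setting. You have simply spelled out in more detail what the paper records in a single sentence.
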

\begin{proof} The proof follows from Theorem \ref{ReT1} and the interpolation formula $(\ref{bse1}).$
\end{proof}\\

\textbf{Acknowledgement}\\

The third author,  A. Gogatishvili started work on this project during his visit to J.M. Rakotoson to the Laboratory of Mathematics of the University of Poitiers, France, in April 2022. He thanks for his generous hospitality and helpful atmosphere during the visit.  He has been partially supported by the RVO:67985840 Czech Republic and by  Shota Rustaveli National Science Foundation of Georgia (SRNSFG), grant no: FR21-12353. \\

The authors are  very grateful  to Professor Fernando Cobos for pointing out the Definition \ref{DKS} to us.

\end{document}